\documentclass{elsarticle}

\usepackage[all,cmtip]{xy}
\usepackage{array}
\usepackage{amsmath, amssymb}
\usepackage{graphics}
\usepackage{caption}
\usepackage{subcaption}
\usepackage{amsthm}
\usepackage{algorithm}
\usepackage{algorithmic}
\usepackage{booktabs}
\usepackage{color}
\usepackage{cancel}
\usepackage{caption}
\usepackage{subcaption}
\usepackage{pdflscape}
\usepackage{epstopdf}
\usepackage[normalem]{ulem}

\newtheorem{theorem}{{\bf Theorem}}
\newtheorem{remark}{{\bf Remark}}

\newtheorem{corollary}[theorem]{{\bf Corollary}}
\newtheorem{proposition}[theorem]{{\bf Proposition}}
\newtheorem{lemma}[theorem]{{\bf Lemma}}
\newtheorem{example}{{\bf Example}}

\def\bfx{\boldsymbol{x}}

\def\bfw{\boldsymbol{w}}

\def\bfQ{{\boldsymbol{Q}}}
\def\bfP{{\boldsymbol{P}}}


\begin{document}

\begin{frontmatter}
\title{Algebraic surfaces invariant under non-Euclidean affine rotations }

\author[a]{Juan G. Alc\'azar}
\ead{juange.alcazar@uah.es}
\author[b]{Ron Goldman}
\ead{rng@rice.edu}


\address[a]{Departamento de F\'{\i}sica y Matem\'aticas, Universidad de Alcal\'a,
E-28871 Madrid, Spain}
\address[b]{Department of Computer Science, Rice University, Houston, Texas 77005, USA}



\begin{abstract}
\emph{Affine rotation surfaces} are a generalization of the well-known surfaces of revolution. Affine rotation surfaces arise naturally within the framework of {\it affine differential geometry}, a field started by Blaschke in the first decades of the past century. Affine rotations are the affine equivalents of Euclidean rotations, and include certain shears as well as Euclidean rotations. Affine rotation surfaces are surfaces invariant under affine rotations. In this paper, we analyze several properties of algebraic affine rotation surfaces and, by using some notions and results from affine differential geometry, we develop an algorithm for determining whether or not an algebraic surface given in implicit form, or in some cases in rational parametric form, is an affine rotation surface. We also show how to find the axis of an affine rotation surface. Additionally, we discuss several properties of \emph{affine spheres}, analogues of Euclidean spheres in the context of affine differential geometry.
\end{abstract}

\end{frontmatter}

\section{Introduction.} \label{intro}

Classical differential geometry, initiated by Gauss in the first decades of the $19^{\mbox{th}}$ century, studies Euclidean invariants -- normal vectors and normal lines, mean curvature and Gaussian curvature -- under rigid motions. In contrast, \emph{affine differential geometry}, initiated by Blaschke in the first decades of the $20^{\mbox{th}}$ century, studies the corresponding affine invariants -- affine normals and affine curvatures -- which are invariant under the unimodular affine group generated by the \emph{special linear group} ${\bf SL}_3({\Bbb R})$, i.e. the group of matrices with determinant equal to 1. 

Classical surfaces of revolution are surfaces invariant under Euclidean rotations about a fixed axis. \emph{Affine rotation surfaces} are surfaces invariant under {\it affine rotations}, groups of linear transformations that leave a line in 3-space (the \emph{axis}) unchanged, and which include classical Euclidean rotations, and also certain classical and scissor shears (see Section 2). 

There are three types of affine rotation surfaces corresponding to the three types of affine rotation groups: elliptic rotation surfaces correspond to surfaces invariant under classical rotations; hyperbolic rotation surfaces correspond to surfaces invariant under hyperbolic rotations, also called scissor shears \cite{AGHM16}; and parabolic rotation surfaces correspond to surfaces invariant under transformations which are composites of certain classical shears. The elliptic affine rotation surfaces are the classical surfaces of revolution, which have been studied extensively in the algebraic case; see for example \cite{AG14, AG17}. The hyperbolic affine rotation surfaces are what the authors have previously called \emph{scissor shear invariant surfaces} \cite{AGHM16}, because these surfaces are invariant under certain scissor shears that leave an axis line unchanged. Algebraic surfaces of this type have been studied recently by the authors in \cite{AGHM16}. 

The goal of this paper is to extend some properties of classical surfaces of revolution to affine rotation surfaces, using some concepts and results from affine differential geometry. In order to carry out this investigation, we need to study several geometric questions related to parabolic affine rotation surfaces, and some properties of \emph{affine spheres}, surfaces whose affine normal lines intersect at a single point. 

Although the notion of an affine sphere appeared first in the context of affine differential geometry, affine spheres are also related to other fields in Mathematics, including real Monge-Amp\'ere equations, projective structures on manifolds, and the geometry of Calabi-Yau manifolds. The interested reader can consult the surveys \cite{Loftin, Loftin-2} for further information. Interestingly, even though affine spheres are not necessarily affine rotation surfaces, we show that there are certain connections between affine spheres and affine rotation surfaces. 

Our three main contributions are to provide:

\begin{itemize}
\item [1.] A geometric characterization of algebraic affine rotation surfaces.
\item [2.] Several geometric properties of parabolic algebraic affine rotation surfaces and algebraic affine spheres. 
\item [3.] An algorithm for detecting whether or not an algebraic surface given in implicit algebraic form, or in some cases in rational parametric form, is an affine rotation surface and for finding the axis lines of these affine rotation surfaces.
\end{itemize}

Each of these problems has been studied for classical surfaces of revolution \cite{AG14, AG17, Vrseck}. One of the main analysis techniques is to use the fact that for classical surfaces of revolution the Euclidean normal lines all intersect the axis of rotation. But for affine rotation surfaces, the Euclidean normal lines no longer intersect the axis line. The failure of this property makes extending results from surfaces of revolution to affine rotation surfaces using standard Euclidean differential geometry quite difficult \cite{AGHM16}. \emph{The key fact that makes the analysis of affine rotation surfaces tractable is that for affine rotation surfaces the affine normal lines all intersect the axis line}. It is primarily this affine analogue of the classical result about the normal lines of a classical surface of revolution that allows us to
more readily extend results for classical surfaces of revolution to affine rotation surfaces. Thus affine differential geometry is one of the keys to understanding and analyzing affine rotation surfaces.

The three types of affine rotation surfaces -- elliptic, hyperbolic, and parabolic -- share several geometric properties: they all have a fixed axis line, and their cross sections
perpendicular to a fixed direction are conic sections (circles, hyperbolas, or parabolas). For elliptic and hyperbolic affine rotation surfaces, these cross sections are by planes
perpendicular to the axis line, but for parabolic affine rotation surfaces these cross sections are by planes parallel to the axis line. This distinction makes the investigation of parabolic affine rotation surfaces a bit more challenging. In this paper we derive several results concerning parabolic affine rotation surfaces; nevertheless, many of our theorems and proofs extend readily and sometimes more easily to the other two types of affine rotation surfaces.

This paper is organized in the following fashion. In Section 2 we formally define what we mean by affine rotations and affine rotation surfaces. We also recall some key notions and formulas from affine differential geometry, including the affine co-normal vector, the affine normal vector, and the affine normal line. In addition, we give a preliminary characterization of affine rotation surfaces in terms of the affine normal lines and the shadow line property (see Theorem \ref{th-imp}). Section 3.1 is devoted to the study of affine spheres; Section 3.2 is devoted to the investigation of parabolic affine rotation surfaces, but similar properties often extend readily to elliptical and hyperbolic affine rotation surfaces (see \cite{AG14, AG17, AGHM16}). We present Section 3.1 and Section 3.2 together in Section 3, because some results in Section 3.2 provide certain conditions under which a parabolic affine rotation surface is an affine sphere. In Section 4, we present an algorithm using Pl\"ucker coordinates for finding the axes of affine rotation of an affine rotation surface given in implicit algebraic form, and for detecting whether or not an algebraic surface is an affine rotation surface; we also address some cases of surfaces given in rational parametric form. We close in Section 5 with a brief summary of our work along with several open problems for future research. Two appendixes are also provided: the first to elucidate the geometry of certain cones that appear in the analysis of parabolic affine rotation surfaces, and the second with more complete descriptions of the algorithms for detecting whether or not a surface in implicit algebraic or in rational parametric form is an affine rotation surface and for finding the axes of affine rotation surfaces.

\section{Preliminaries.} \label{sec-prelim}
\subsection{Affine rotation surfaces.} \label{prelim-new}

An \emph{affine rotation group} is a uniparametric matrix group that is a subgroup of the special linear group ${\bf SL}_3({\Bbb R})$, i.e. the group of matrices with determinant equal to 1, which leaves invariant exactly one line of 3-space, called the \emph{affine axis of rotation}. Lee \cite{Lee} shows that there are only three different types of such subgroups; in an appropriate coordinate system, these types correspond to the following uniparametric matrix groups: 

\begin{equation}\label{types}
\begin{array}{ccc}
\begin{pmatrix}\cos(\alpha) & -\sin(\alpha) & 0 \\ \sin(\alpha) & \cos(\alpha) & 0 \\ 0 & 0 & 1\end{pmatrix}, & 
\left(\begin{array}{ccc}
\mbox{cosh}(\alpha) & \mbox{sinh}(\alpha) & 0\\
\mbox{sinh}(\alpha) & \mbox{cosh}(\alpha) & 0\\
0 & 0 & 1
\end{array}\right), & 
\begin{array}{cc}
\left(\begin{array}{ccc}
1 & 0 & 0 \\
\alpha & 1 & 0 \\
\frac{\alpha^2}{2} & \alpha & 1
\end{array}\right).
\end{array}
\end{array}
\end{equation}

In the three cases of Eq. \eqref{types}, the invariant line is the $z$-axis. We name the rotations defined in each case as \emph{elliptic} (left-most matrix, which defines a classical rotation about the $z$-axis), \emph{hyperbolic} (center matrix, which defines a hyperbolic rotation about the $z$-axis), and \emph{parabolic} (right-most matrix). 

In turn, the surfaces invariant under one of these matrix groups are called \emph{affine rotation surfaces}, and are said to be of elliptic, hyperbolic or parabolic type depending on the form of the matrix group. The affine rotation surfaces of elliptic type are the well-known surfaces of revolution. Thus, affine rotation surfaces are generalizations of surfaces of revolution. The affine rotation surfaces of hyperbolic type are studied in \cite{AGHM16}, where they are called \emph{scissor-shear invariant} surfaces, or {\sc ssi}-surfaces, for short. Fig. \ref{ex-fig} shows a parabolic affine rotation surface (left), and a hyperbolic affine rotation surface (right). 

\begin{figure}
\begin{center}
$$\begin{array}{cc}
\includegraphics[scale=0.33]{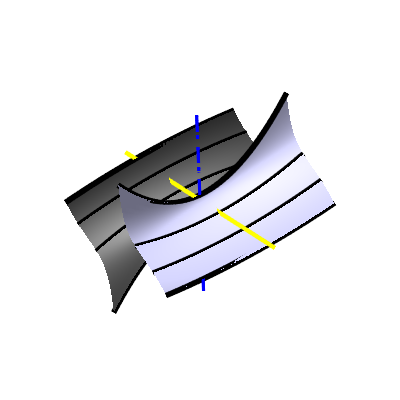} & \includegraphics[scale=0.33]{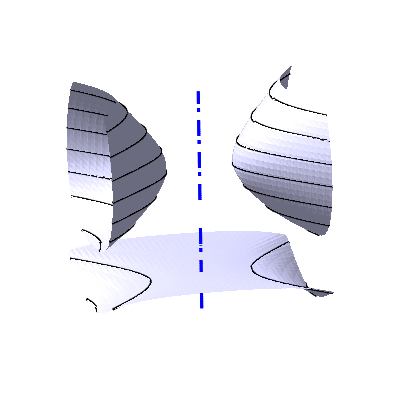}
\end{array}$$
\end{center}
\caption{Parabolic (left) and hyperbolic (right) affine rotation surfaces. The dotted blue lines are the affine axes of rotation. For the parabolic affine rotation surface, the yellow line is normal to all the planes containing the parallel curves (which are parabolas).}\label{ex-fig}
\end{figure} 

Every affine rotation surface about the $z$-axis can be parametrized locally around a regular point using differentiable functions $f(s),g(s)$ as 
\begin{equation}\label{local-param}
{\bf x}(\alpha, s)=\bfQ_{\alpha}\cdot [f(s),0,g(s)]^T,
\end{equation}
where $[f(s),0,g(s)]^T$ parametrizes a \emph{directrix curve} and $\bfQ_{\alpha}$ corresponds to one of the uniparametric matrix groups in Eq. \eqref{types}. Using this representation, the curves ${\bf x}(\alpha_0,s)$ are called \emph{meridians}, while the curves ${\bf x}(\alpha,s_0)$ are called \emph{parallel curves}. In particular, the directrix is a meridian. Moreover, one can show that the parallel curves are \cite{Lee} (a) in the elliptic case, circles centered on the $z$-axis, contained in planes normal to the $z$-axis (see Fig. \ref{ex-fig-parallel}, left); (b) in the hyperbolic case, rectangular hyperbolas centered on the $z$-axis, contained in planes normal to the $z$-axis, with the same center and asymptotes (see Fig. \ref{ex-fig}, right, and also Fig. \ref{ex-fig-parallel}, center); (c) in the parabolic case, parallel parabolas placed in planes normal to the $x$-axis, with the same axes of symmetry, where the axes of symmetry are parallel to the $z$-axis, i.e. to the axis of rotation (see Fig. \ref{ex-fig}, left, and also Fig. \ref{ex-fig-parallel}, right). 

\begin{figure}
\begin{center}
$$\begin{array}{ccc}
\hspace{-1.5 cm} \includegraphics[scale=0.33]{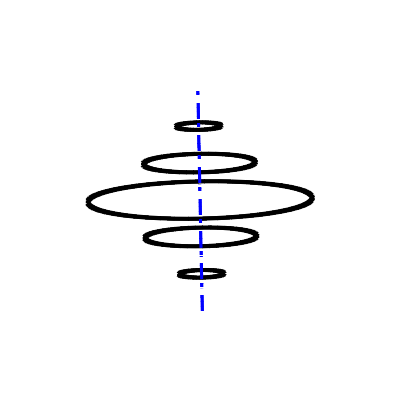}  & \includegraphics[scale=0.33]{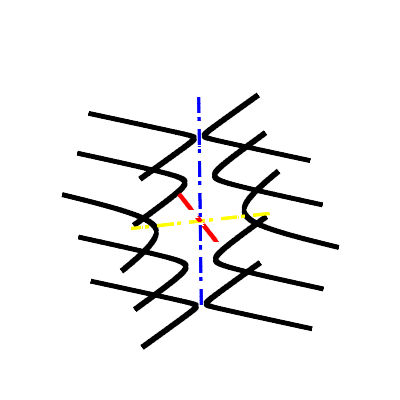} & \includegraphics[scale=0.33]{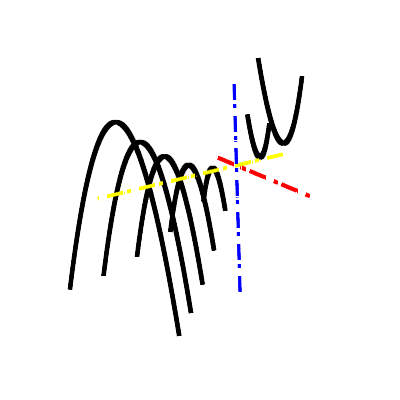} 
\end{array}$$
\end{center}
\caption{Parallel curves for each type of affine rotation surface: elliptic (left), hyperbolic (center), parabolic (right)}\label{ex-fig-parallel}
\end{figure}

Each of the matrix groups $\bfQ_{\alpha}$ in Eq. \eqref{types} preserves a certain quadratic form $p(\bfx)$ (see \cite{Lee}), in the sense that $p(\bfx)=p(\bfQ_{\alpha}\bfx)$: elliptic affine rotations about the $z$-axis preserve $p(x,y)=x^2+y^2$; hyperbolic affine rotations about the $z$-axis preserve $p(x,y)=x^2-y^2$; and parabolic affine rotations about the $z$-axis preserve $p(x,y,z)=y^2-2xz$. Because of this invariance, since elliptic and hyperbolic affine rotations about the $z$-axis preserve the $z$-coordinate, if $F(x,y,z)=0$ implicitly defines an algebraic surface $S$ invariant under such rotations, then $F(\bfx)=\tilde{F}(p(\bfx),z)$, where $\tilde{F}$ is a bivariate polynomial. However, parabolic affine rotations about the $z$-axis leave the $x$-coordinate, and not the $z$-coordinate, invariant, so if $F(x,y,z)=0$ implicitly defines an algebraic surface $S$ invariant under such an affine rotation, then $F(\bfx)=\tilde{F}(p(\bfx),x)$. We shall return to this key observation in Section \ref{subsec-parab} (see Theorem \ref{th-general}).

Observe also that in the elliptic case there is just one distinguished direction, namely that of the axis of rotation, because a surface of revolution is completely symmetric about the axis of rotation. But in the hyperbolic and parabolic cases there are three distinguished directions, because we need to make precise three orthogonal directions in order to define these affine rotation groups. In the hyperbolic case in addition to the direction of the affine rotation axis, we also need the directions corresponding to the major and minor axes of the parallel curves, i.e. the hyperbolas. In the parabolic case in addition to the direction of the affine rotation axis, we also need the direction normal to all the planes containing the parallel curves, i.e. the parabolas, and the direction normal to both this last direction and the direction of the affine rotation axis. Fig. \ref{ex-fig-parallel} illustrates these ideas. Here, the axis of rotation is plotted in blue; in the hyperbolic case (center), the direction of the major axis of the parallel curves is plotted in yellow; in the parabolic case (right), the direction normal to the planes containing the parallel curves is also plotted in yellow. The direction normal to both the blue and the yellow directions is plotted in red. 

\begin{remark}\label{rem-con} Conversely, if there exists a line ${\mathcal A}$ such that the intersections of a surface $S$ with planes $\Pi$ normal to ${\mathcal A}$ are circles centered at the points $\Pi\cap {\mathcal A}$ we can locally parametrize $S$ as in \eqref{local-param}, after a suitable orthogonal change of coordinates, and therefore we can recognize $S$ as an elliptic surface of rotation about ${\mathcal A}$. Similarly if the sections $\Pi\cap S$ are rectangular hyperbolas with parallel asymptotes, centered at the points $\Pi\cap {\mathcal A}$, $S$ is a hyperbolic surface of rotation about ${\mathcal A}$. However, the fact that there exists a line ${\mathcal A}$ and a family of parallel planes $\tilde{\Pi}$, \emph{parallel} to ${\mathcal A}$, such that $\tilde{\Pi}\cap S$ are parabolas whose axes of symmetry are lines through $\tilde{\Pi}\cap S$ parallel to ${\mathcal A}$, is not enough to guarantee that the surface is a parabolic affine rotation surface: a counterexample is the paraboloid of revolution. \end{remark}

An affine rotation surface can have more than one axis of affine rotation, i.e. that a surface can be an affine rotation surface in more than one way. For elliptic affine rotation surfaces, the sphere, which has infinitely many axes, is the only example. However, hyperbolic affine rotation surfaces can have either 1, or 3, or infinitely many axes of affine rotation (see Section 3.2 of \cite{AGHM16}). Furthermore, it is also possible that the same surface is an affine rotation surface of several types; for instance, the cone $x^2=y^2+z^2$ is an elliptic affine rotation surface about the $x$-axis, and a hyperbolic affine rotation surface about the $z$-axis. We will explore this question in more detail in Section \ref{subsec-parab}; the interested reader can also check Section 3 of \cite{AGHM16} for further reading on this question for the case of hyperbolic affine rotation surfaces.  

\subsection{A first characterization for affine rotation surfaces.}

In order to characterize affine rotation surfaces, we need to recall some notions from affine differential geometry that we take from \cite{Andrade,Freitas}. We introduce these concepts first for surfaces $S$ parametrized by ${\bf x}:U\subset {\Bbb R}^2\to {\Bbb R}^3$, where ${\bf x}={\bf x}(u,v)$ is a mapping with sufficiently good properties. Additionally, we require that $S$ has Gaussian curvature not identically equal to zero, i.e. that $S$ is not locally isometric to the plane, or, in other words, $S$ is not developable. 

The \emph{affine co-normal vector} at each point of $S$ is defined as
\begin{equation} \label{nu}
\nu =|K|^{-\frac{1}{4}}\cdot {\bf N},
\end{equation}
where ${\bf N}$ is the unitary Euclidean normal vector, and $K$ is the Gaussian curvature. The affine co-normal vector is not defined when $K$ is zero, which is the reason why we require $S$ to have nonzero Gaussian curvature. 

The \emph{affine normal vector} to $S$ at a point $p\in S$ is  
\begin{equation}\label{affine-normal}
\xi(p)=[\nu(p),\nu_u(p),\nu_v(p)]^{-1} \left(\nu_u(p)\times \nu_v(p)\right),
\end{equation}
where $\bullet_u,\bullet_v$ represent the partial derivatives of $\bullet$ with respect to the variables $u,v$, and $[\bullet,\bullet_u,\bullet_v]$ represents the determinant of $\bullet,\bullet_u,\bullet_v$. The affine normal vectors are known to be covariant under affine transformations (see Prop. 3 in \cite{Andrade}), i.e. if $h$ represents an affine transformation, then $\xi(h(p))=h(\xi(p))$. The {\it affine normal line} at $p\in S$ is the line through $p$, parallel to the affine normal vector. 

If the surface $S$ is given implicitly by $F(x,y,z)=0$, then by the Implicit Function Theorem the affine co-normal vector is given by \cite{AL}
\[
\nu=\frac{1}{d^{1/4}}\left(\frac{F_x}{F_z},\frac{F_y}{F_z},1\right),
\]
where $d$ is a rational function of $x,y,z$ and $F_x,F_y,F_z$ are the partial derivatives of $F$ with respect to $x,y,z$. An explicit expression for the affine normal vector $\xi$ for surfaces defined implicitly by $F(x,y,z)=0$ can also be derived; such an expression can be found in Appendix A of \cite{Andrade}. 

Before giving the characterization of affine rotation surfaces that will be key for us, we need to introduce one more property: we say that a surface $S$ has the \emph{shadow line} property with respect to a line ${\mathcal A}$, if along every meridian, i.e. intersections of $S$ with planes containing ${\mathcal A}$, the tangents to the parallel curves are parallel (so that ``enlightening the surface in the direction of these tangents produces the meridian as a {\it shadow line}" \cite{Man-mail}). 

\begin{lemma} \label{lem-shadows} Affine rotation surfaces have the shadow line property with respect to the axis line ${\mathcal A}$.
\end{lemma}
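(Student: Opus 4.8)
The plan is to argue directly from the local parametrization \eqref{local-param}, ${\bf x}(\alpha,s)=\bfQ_{\alpha}\cdot[f(s),0,g(s)]^T$, in which the parallel curves are the coordinate curves $\alpha\mapsto{\bf x}(\alpha,s_0)$ and the meridians are the coordinate curves $s\mapsto{\bf x}(\alpha_0,s)$. The tangent to the parallel curve through the point ${\bf x}(\alpha_0,s)$ is the partial derivative
\[
{\bf x}_\alpha(\alpha_0,s)=\left.\frac{\partial\bfQ_{\alpha}}{\partial\alpha}\right|_{\alpha=\alpha_0}\cdot[f(s),0,g(s)]^T,
\]
so to establish the shadow line property with respect to $\mathcal{A}$ it suffices to show that, as $s$ ranges over a fixed meridian $\alpha=\alpha_0$, all of these tangent vectors point in a common direction.

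The observation that dispatches the three cases at once is that in each of the matrix groups of Eq. \eqref{types} the third column is the constant vector $[0,0,1]^T$; hence the third column of $\partial\bfQ_{\alpha}/\partial\alpha$ vanishes identically. Because the directrix $[f(s),0,g(s)]^T$ additionally has vanishing second entry, the product above collapses to $f(s)$ times the first column of $\partial\bfQ_{\alpha}/\partial\alpha$ evaluated at $\alpha_0$; that is, ${\bf x}_\alpha(\alpha_0,s)=f(s)\,\mathbf{c}(\alpha_0)$, where $\mathbf{c}(\alpha_0)$ depends only on $\alpha_0$. A one-line computation identifies $\mathbf{c}(\alpha_0)$ as $(-\sin\alpha_0,\cos\alpha_0,0)^T$, $(\sinh\alpha_0,\cosh\alpha_0,0)^T$, and $(0,1,\alpha_0)^T$ in the elliptic, hyperbolic, and parabolic cases respectively. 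Thus along the meridian $\alpha=\alpha_0$ every tangent to a parallel curve is a scalar multiple of the single fixed vector $\mathbf{c}(\alpha_0)$, which is exactly the shadow line property.

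I do not anticipate a genuine obstacle, as the argument reduces to reading off one column of each derivative matrix; the only points meriting a word of care are technical. First, one should check that the parametrization \eqref{local-param} realizes meridians and parallel curves in the intended geometric sense---a meridian $\alpha=\alpha_0$ lies in a plane through the axis $\mathcal{A}$, as is immediate from the explicit formulas. Second, on the degenerate locus $f(s)=0$ the point lies on $\mathcal{A}$ and the parallel curve collapses, so the statement should be read at regular points off the axis, where $f(s)\neq0$ and ${\bf x}_\alpha$ is a genuine nonzero tangent. Finally, since parallelism of vectors is an affine invariant and, by definition, every affine rotation surface is affinely equivalent to one written in the normal form \eqref{local-param} with $\mathcal{A}$ the $z$-axis, establishing the property in these coordinates establishes it in general.
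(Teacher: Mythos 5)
Your proof is correct and follows essentially the same route as the paper: reduce to the canonical local parametrization \eqref{local-param} and observe that $\left.\partial{\bf x}/\partial\alpha\right\rvert_{\alpha=\alpha_0}=f(s)\,\mathbf{c}(\alpha_0)$ with $\mathbf{c}(\alpha_0)$ independent of $s$. The only difference is cosmetic: the paper writes out just the parabolic case and leaves the other two to the reader, whereas you handle all three at once by noting that the third column of $\partial\bfQ_{\alpha}/\partial\alpha$ vanishes and the directrix has zero second entry, which is a nice tidy packaging of the same computation.
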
 

\begin{proof} Without loss of generality, let us assume that $S$ can (at least locally) be parametrized as in Eq. \eqref{local-param}, where $\bfQ_{\alpha}$ represents a canonical rotation group, and ${\mathcal A}$ is the $z$-axis. We need to see that for all $s\in {\Bbb R}$, $\left.\frac{\partial {\bf x}}{\partial \alpha}\right\rvert_{\alpha=\alpha_0}=\mu(s) {\bf v}$, where ${\bf v}$ is constant for a fixed value $\alpha=\alpha_0$. In order to establish this result, one can check this statement separately for the elliptic, hyperbolic and parabolic cases. We consider just the parabolic case, and we leave the remaining two cases to the reader. In the parabolic case we have 
\[
\left.\frac{\partial {\bf x}}{\partial \alpha}\right\rvert_{\alpha=\alpha_0}=[0,f(s),\alpha_0 f(s)]^T=f(s)\cdot [0,1,\alpha_0]^T,\]
so that ${\bf v}=[0,1,\alpha_0]^T$, which is a constant vector whenever $\alpha_0$ is fixed. 
\end{proof}

Now we can give a characterization for affine rotation sufaces. 

\begin{theorem}\label{th-imp} The surface $S$ is an affine rotation surface with axis ${\mathcal A}$ if and only if the following two conditions hold: (1) All the affine normal lines intersect ${\mathcal A}$; (2) $S$ has the shadow line property with respect to the line ${\mathcal A}$. 
\end{theorem}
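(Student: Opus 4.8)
The statement is a biconditional, so the plan is to treat the two implications separately; throughout I would first apply an orthogonal change of coordinates carrying $\mathcal{A}$ to the $z$-axis, which costs nothing since the Euclidean normal and Gaussian curvature, hence $\nu$ and $\xi$, transform covariantly under it. For the forward implication, condition (2) is immediate: it is exactly Lemma \ref{lem-shadows}. So all the work in the ``only if'' direction lies in condition (1).

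For condition (1) the tool is covariance of the affine normal. Assuming $S$ is parametrized as in \eqref{local-param}, $\bfx(\alpha,s)=\bfQ_\alpha\cdot[f(s),0,g(s)]^T$, invariance of $S$ under $\bfQ_\alpha\in\mathbf{SL}_3$ and Prop. 3 of \cite{Andrade} give $\xi(\bfQ_\alpha p)=\bfQ_\alpha\,\xi(p)$, so the affine normal line at $\bfQ_\alpha p$ is $\bfQ_\alpha$ applied to the affine normal line $\ell_p$ at $p$. Since $\bfQ_\alpha$ fixes $\mathcal{A}$ as a set, the property ``$\ell_p$ meets $\mathcal{A}$'' is constant along each parallel curve, so it suffices to verify it at one point of each orbit, i.e. along the directrix $s\mapsto[f(s),0,g(s)]$, which lies in the plane $y=0$. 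There I would use the reflection $\sigma:(x,y,z)\mapsto(x,-y,z)$: in all three cases $\sigma$ preserves the invariant quadratic form ($x^2+y^2$, $x^2-y^2$, or $y^2-2xz$) and hence leaves $S$ invariant (it reverses the orbit parameter $\alpha\mapsto-\alpha$), while fixing every directrix point. By covariance $\ell_p$ is then $\sigma$-invariant, and a $\sigma$-invariant line through a point of the fixed plane $y=0$ is either contained in that plane or points along $(0,1,0)$. The latter is the parallel-curve tangent at the directrix (one checks $\partial_\alpha\bfx|_{\alpha=0}$ is a multiple of $(0,1,0)$ in each case) and is therefore tangent to $S$, contradicting the transversality of the affine normal. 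Hence $\ell_p\subset\{y=0\}$, a plane containing $\mathcal{A}$, so $\ell_p$ meets $\mathcal{A}$ unless it is parallel to the $z$-axis; the latter is a proper (algebraic) locus, on whose complement the claim holds and to which it extends by continuity (equivalently, in the projective closure). Pushing forward by $\bfQ_\alpha$ yields condition (1) everywhere. (Alternatively, one can simply compute $\xi$ along the directrix from \eqref{affine-normal} and exhibit the intersection point on $\mathcal{A}$.)

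For the converse I would reconstruct the affine-rotation structure from (1) and (2). With $\mathcal{A}$ the $z$-axis, the meridians are the sections of $S$ by planes through $\mathcal{A}$, and (2) furnishes, along each meridian, a constant direction tangent to the parallel curves; letting this direction vary with the meridian defines a vector field on $S$ whose integral curves are the parallel curves and whose flow $\psi_\alpha$ sweeps $S$ from a single directrix, giving a parametrization $\bfx(\alpha,s)$ with $\partial_\alpha\bfx=\lambda(\alpha,s)\,\bfv(\alpha)$. The crux is to upgrade this kinematic flow to a one-parameter subgroup of $\mathbf{SL}_3$ fixing $\mathcal{A}$: differentiating the sweep should show $\psi_\alpha$ is generated by a position-linear vector field, i.e. an element $A\in\mathfrak{sl}_3$, and condition (1) -- that the affine normals all meet the \emph{fixed} line $\mathcal{A}$ -- is what forces this generator to fix $\mathcal{A}$ as a line rather than merely translate or shear $S$, so that $\psi_\alpha=\bfQ_\alpha=\exp(\alpha A)$. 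I would then invoke Lee's classification (the three normal forms in \eqref{types}) to identify $\bfQ_\alpha$ as elliptic, hyperbolic, or parabolic, reading off the type from the conic shape of the parallel curves.

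The main obstacle is the converse, specifically the step turning the shadow-line sweep into a genuine linear $\mathbf{SL}_3$-action fixing $\mathcal{A}$: one must show that (1) and (2) \emph{together} pin the infinitesimal generator down to one of the three Lee types, and that neither condition alone suffices -- witness the paraboloid of revolution of Remark \ref{rem-con}, which enjoys the parallel-parabola cross-section property with respect to a wrong line. A secondary, minor obstacle in the forward direction is correctly interpreting the degenerate locus where the affine normal is parallel to $\mathcal{A}$.
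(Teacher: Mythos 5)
Your forward direction is correct and takes a genuinely different, more conceptual route than the paper. The paper disposes of condition (2) via Lemma \ref{lem-shadows}, exactly as you do, but for condition (1) it simply computes the affine normal lines of the three canonical parametrizations \eqref{local-param} and verifies by brute force, in the Maple worksheet \cite{impl1}, that they meet the $z$-axis. Your symmetry argument --- equivariance under $\bfQ_\alpha$ reduces the check to the directrix, and invariance under the reflection $\sigma(x,y,z)=(x,-y,z)$ (which fixes the directrix pointwise, maps $S$ to itself because it conjugates $\bfQ_\alpha$ to $\bfQ_{-\alpha}$, and under which the affine normal line is equivariant since the paper's covariance statement is asserted for all affine maps) forces the affine normal line into the plane $y=0\supset{\mathcal A}$ --- replaces that computation by a short geometric argument, which is a real gain. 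Two caveats: the inference ``$\sigma$ preserves the quadratic form, hence $S$'' should be dropped in favour of your parenthetical conjugation argument, which is the correct one; and the residual case ``affine normal line parallel to the axis'' cannot be dismissed as a proper locus handled by continuity, since it can occur identically (the paraboloid of revolution is an elliptic affine rotation surface all of whose affine normals are parallel to the axis), so ``intersect'' must be read projectively, i.e.\ in the sense of the Pl\"ucker linear complex \eqref{lin} --- a convention the paper leaves implicit as well.

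The converse is where your proposal has a genuine gap, and it is exactly the step you flag yourself: upgrading the shadow-line sweep $\psi_\alpha$ to a one-parameter subgroup of $\mathbf{SL}_3(\RR)$ fixing ${\mathcal A}$. Nothing in your sketch shows that the flow of the parallel-curve direction field is position-linear, and you give no mechanism by which condition (1) supplies the missing rigidity; the paraboloid of revolution of Remark \ref{rem-con} that you invoke is precisely a warning that cross-section data alone do not force linearity. The paper does not attempt this argument either: it quotes the classical result of Su (Theorem 18 of \cite{Su}) for the implication $(\Leftarrow)$ and moves on. So you should either cite that theorem or actually carry out the reconstruction --- which amounts to reproving Su and is a substantive piece of differential geometry, not a routine verification. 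As written, your backward implication is a program rather than a proof.
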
 

\begin{proof} The implication $(\Leftarrow)$ is Theorem 18 in \cite{Su}. As for $(\Rightarrow)$, without loss of generality it suffices to check these two conditions for the surfaces parametrized as in Eq. \eqref{local-param}, where $\bfQ_{\alpha}$ represents a canonical affine rotation group. Condition (2) on the shadow line property is proved in Lemma \ref{lem-shadows}. Condition (1) can be checked by verifying that in all the cases, the affine normal lines of these surfaces intersect the $z$-axis; the calculations are lengthy, but are verified in \cite{impl1}.
\end{proof}

\begin{figure}
\begin{center}
$$\begin{array}{c}
\includegraphics[scale=0.4]{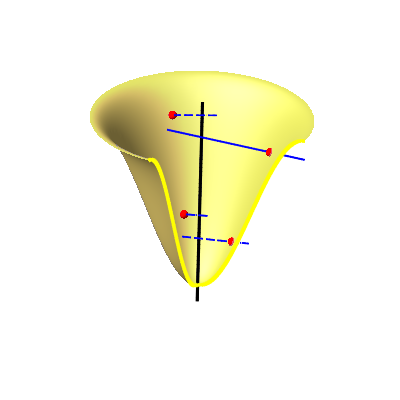}  
\end{array}$$
\end{center}
\caption{Affine normal lines (in blue) intersecting the affine axis of rotation (the thick black line) of an elliptic affine rotation surface.}\label{fig-intersect}
\end{figure}

Figure \ref{fig-intersect} illustrates Theorem \ref{th-imp} for an elliptic affine rotation surface. 

In Section \ref{subsec-aff-spheres} we will see that the implication $(\Leftarrow)$ of Theorem \ref{th-imp} really requires the shadow line property, i.e. it is not enough that all the affine normal lines intersect a line ${\mathcal A}$ to ensure that $S$ is an affine rotation surface about ${\mathcal A}$. Nevertheless, the characterization of affine rotation surfaces in Theorem \ref{th-imp} is not well suited for computational purposes, because it is not easy to derive from this characterization an algorithm for detecting whether or not a given surface is an affine rotation surface. For this reason, in Section \ref{sec-algebraic} when we study algebraic surfaces, we will derive other results, based on Theorem \ref{th-imp}, to determine whether or not a given surface is an affine rotation surface in a more efficient way. 

In the rest of this paper we will impose the following conditions on the surface $S$: 

\begin{itemize}
\item [(i)] $S$ is an \emph{algebraic} surface, i.e. $S$ is the zero set of a polynomial $F(x,y,z)$, which defines its \emph{implicit} equation.
\item [(ii)] $S$ is irreducible. 
\item [(iii)] $S$ is real, i.e. the real part of $S$ is 2-dimensional, so $S$ is not degenerate (as happens, for instance, with surfaces like $x^2+y^2+z^2=0$, or $x^2+y^2=0$).
\item [(iv)] The Gaussian curvature of $S$ is not identically zero (i.e. $S$ is not developable), so that the affine co-normal and the affine normal vectors are defined at almost all points of $S$. In particular, $S$ is not a plane. 
\end{itemize}

\section{ Affine spheres and parabolic affine rotation surfaces.}\label{new-parab-sec}

In this section we recall the notion, well-known in affine differential geometry, of an \emph{affine sphere}, a surface whose affine normals intersect at a single point. Additionally, we will study certain properties of parabolic, algebraic affine rotation surfaces; analogous properties for the elliptic and the hyperbolic cases are derived in \cite{AG14} and \cite{AGHM16}.  

\subsection{Affine spheres.}\label{subsec-aff-spheres}

A surface $S$ is said to be an \emph{affine sphere} if all the affine normals of $S$ intersect at a single point, called the \emph{center} of the sphere; the interested reader can check \cite{Loftin, Loftin-2, Nomizu} for further reading on this topic. Affine spheres are classified into three different types: \emph{elliptic}, if the center is an affine point lying on the convex side of $S$; \emph{hyperbolic}, if the center is an affine point lying on the concave side of $S$; and \emph{parabolic}, if the center is a point at infinity (i.e. if all the affine normals are parallel). Quadrics are the simplest non-trivial examples of affine spheres. Furthermore, any algebraic elliptic or parabolic affine sphere must be a quadric (see Section 3.3 of \cite{Loftin-2}). However, not all hyperbolic spheres are quadric surfaces: for instance, the cubic surface $xyz=1$ is an affine sphere (see Theorem 6 of \cite{Simon}). Figure \ref{fig-aff} shows two affine spheres, jointly with their affine normal lines (the solid black lines), the ellipsoid and the cubic $xyz=1$; in both cases the center is located at the origin.

\begin{figure}
\begin{center}
$$\begin{array}{cc}
\hspace{-2 cm}\includegraphics[scale=0.7]{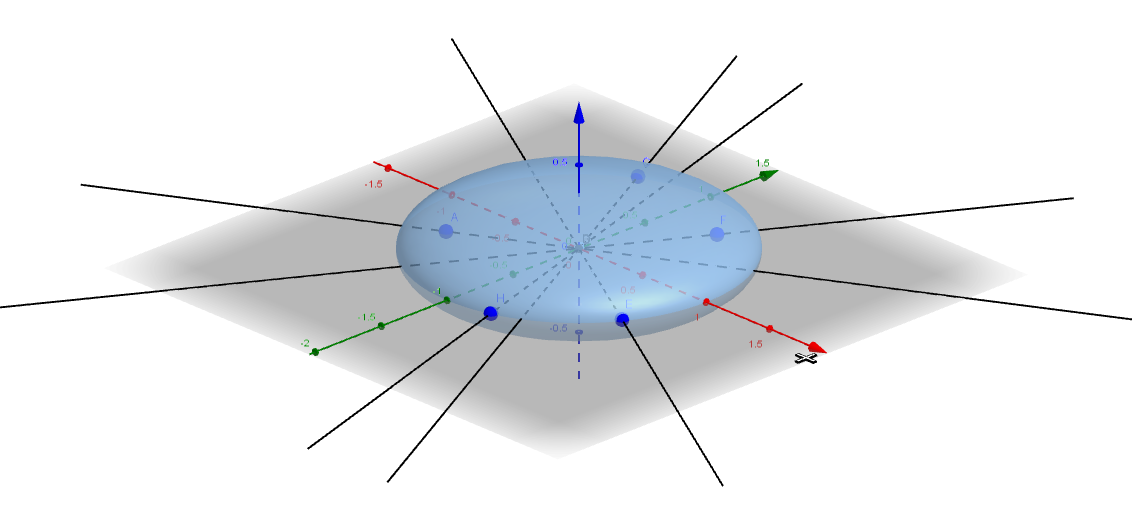} & \includegraphics[scale=0.7]{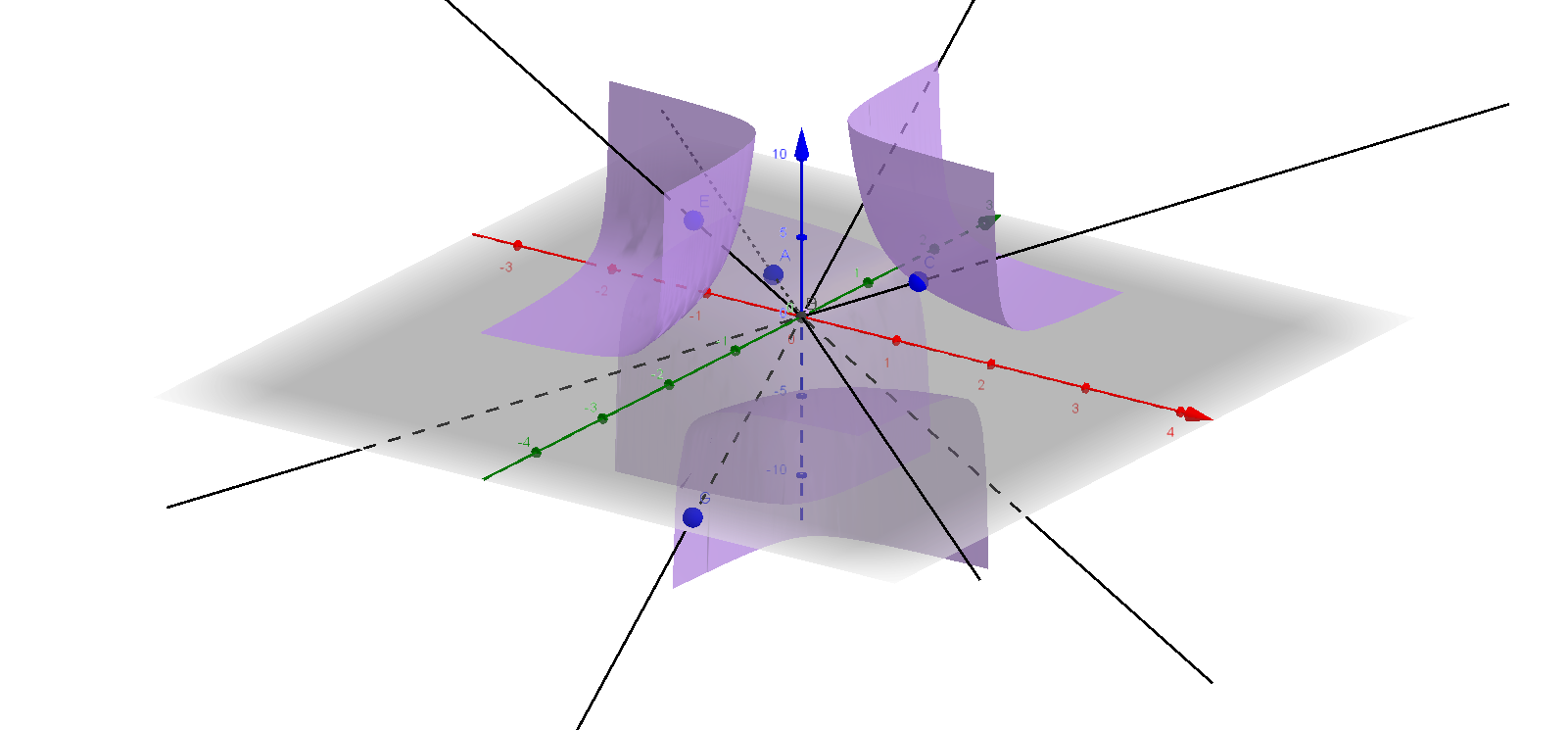}
\end{array}$$
\end{center}
\caption{Two affine spheres: the ellipsoid (left) and the cubic surface $xyz=1$ (right). The black lines are the affine normal lines.}\label{fig-aff}
\end{figure}

The term ``affine sphere" can be misleading, since affine spheres need not be affine rotation surfaces. For instance, an ellipsoid that is not a surface of revolution is an elliptic affine sphere, but not an affine rotation surface. Indeed, if an ellipsoid is not a surface of revolution, then such an ellipsoid is not an 
elliptic affine rotation surface. But such an ellipsoid cannot be a parabolic or hyperbolic affine rotation surface either, because there is no family of parallel planes whose intersection with an ellipsoid produces hyperbolas or parabolas. Interestingly, this example also shows that the shadow line property is necessary to prove the implication $(\Leftarrow)$ of Theorem \ref{th-imp}: otherwise, in the case of the ellipsoid, any line through the affine center would be an affine axis of rotation, and we have just seen that a generic (i.e. non-revolution) ellipsoid has no affine axes of rotation. 

Next we are going to explore some properties involving affine spheres and affine rotation surfaces. First, we need the following lemma, which follows easily from differentiating the affine co-normal vector (see Eq. \eqref{nu}) with respect to $u,v$, and performing some straightforward calculations. 

\begin{lemma} \label{formula-previous}
Let $\nu$ represent the affine co-normal vector, ${\bf N}$ a unitary normal vector, and $K\not \equiv 0$ the Gaussian curvature  of a surface $S$. Then 
\begin{equation}\label{long}
\left(\nu_u\times \nu_v\right)\cdot {\bf N}=|K|^{-1/2}\cdot \left({\bf N}_u\times {\bf N}_v\right)\cdot {\bf N}.
\end{equation}
\end{lemma}

Lemma \ref{formula-previous} allows us to prove the following result. 

\begin{lemma} \label{atmost}
Let $S$ be a surface whose Gaussian curvature is not identically zero. If all the affine normal lines to $S$ intersect two lines ${\mathcal A}_1$ and ${\mathcal A}_2$, then ${\mathcal A}_1$ and ${\mathcal A}_2$ intersect at a point $P$, and all the affine normal lines go through $P$, i.e. $S$ is an affine sphere centered at $P$. 
\end{lemma}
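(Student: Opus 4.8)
The plan is to organize the proof around the mutual position of the two lines $\mathcal{A}_1$ and $\mathcal{A}_2$, exploiting the single structural fact that every affine normal line of $S$ is a common transversal of $\mathcal{A}_1$ and $\mathcal{A}_2$. First I would record the elementary trichotomy for common transversals of two distinct lines: a line meeting both either (i) meets them at one and the same point, which can only be $\mathcal{A}_1\cap\mathcal{A}_2$, or (ii) meets them at two distinct points and hence lies in the plane $\Pi$ they span, whenever such a plane exists. The driving observation, used in every case, is that because $S$ is irreducible, real, two-dimensional and not a plane (conditions (ii)--(iv)), no open subset of its affine normal lines can be forced into a single plane $\Pi$: if every affine normal lay in $\Pi$, then every base point ${\bf x}(u,v)\in\Pi$, so $S\subset\Pi$, contradicting (iv).

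With this in hand, the parallel and intersecting cases are short. If $\mathcal{A}_1\parallel\mathcal{A}_2$ (and they are distinct), every common transversal meeting both at finite points lies in the plane $\Pi$ they span; hence all affine normals lie in $\Pi$ and $S\subset\Pi$, contradicting (iv), so this case cannot occur. If $\mathcal{A}_1$ and $\mathcal{A}_2$ meet at a point $P$, then by the trichotomy each affine normal line either passes through $P$ or lies in $\Pi$. The locus $S\cap\Pi$ is a proper, at most one-dimensional subvariety of $S$, so on its dense open complement the affine normal does not lie in $\Pi$ and must pass through $P$. Since the affine normal line depends continuously (indeed algebraically) on the point of $S$, passing through the fixed point $P$ is a closed condition holding on a dense set, hence everywhere; thus all affine normals pass through $P$ and $S$ is an affine sphere centered at $P$.

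The crux, and the step I expect to be the main obstacle, is excluding the skew case: when $\mathcal{A}_1$ and $\mathcal{A}_2$ are skew the common transversals form a genuine two-parameter congruence whose dimension matches that of the affine normal congruence, so no contradiction arises from a dimension count. Here I would use the focal structure of the congruence. Write $g_i(p)={\bf x}+\tau_i\,\xi$ for the point where the affine normal at $p$ meets $\mathcal{A}_i$; note that in the skew case $g_1(p)\neq g_2(p)$ always, since a common point would lie on both skew lines (so $g_1(p_0)=g_2(p_0)$ at even one point already forces the lines to meet). The key input from affine differential geometry is that the derivatives $\xi_u,\xi_v$ are tangent to $S$ (the affine shape operator), so that $g_i$ traces a curve on $\mathcal{A}_i$ exactly when $\tau_i$ is one of the two affine focal distances; consequently $g_1(p)$ and $g_2(p)$ are the two focal points of the affine normal congruence, and the hypothesis forces both focal surfaces to degenerate onto the straight lines $\mathcal{A}_1,\mathcal{A}_2$. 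Differentiating the relations $g_{i,u},g_{i,v}\parallel{\bf d}_i$, with ${\bf d}_i$ the constant direction of $\mathcal{A}_i$, in the moving frame $\{{\bf x}_u,{\bf x}_v,\xi\}$ yields an overdetermined system on $\tau_1,\tau_2$ and the entries of the affine shape operator; I expect this system to force the two focal points to coincide, i.e. $S$ to be affine-umbilic and hence an affine sphere by the classical characterization, in which case the degenerate focal locus is a single point $P\in\mathcal{A}_1\cap\mathcal{A}_2$ and the lines cannot be skew. Carrying out this frame computation cleanly is the technical heart of the argument; the reductions above are then routine.
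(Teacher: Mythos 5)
Your handling of the parallel and intersecting configurations is correct, and in the intersecting case it is actually more careful than the paper's own argument: the paper asserts that either \emph{all} affine normals pass through $P$ or \emph{all} lie in the plane spanned by $\mathcal{A}_1$ and $\mathcal{A}_2$, whereas the dichotomy is really pointwise, and your density-plus-closedness argument on $S\setminus\Pi$ is the right way to dispose of the mixed case. The genuine gap is the skew case, which you correctly identify as the crux and then do not prove. From ``Differentiating the relations $g_{i,u},g_{i,v}\parallel{\bf d}_i$\dots'' onward you have a plan, not an argument: you write that you \emph{expect} the overdetermined system to force the two focal points to coincide, but you neither set up nor solve that system, nor do you verify the preliminary claim that $g_1(p),g_2(p)$ are exactly the focal points of the affine normal congruence (this needs the congruence to be an open piece of the hyperbolic linear congruence of transversals, so that its focal locus degenerates onto $\mathcal{A}_1\cup\mathcal{A}_2$). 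Since excluding skew axes is precisely the non-trivial content of the lemma --- your own observation that the transversal congruence of two skew lines is two-dimensional, matching the affine normal congruence, shows why no soft dimension count can work --- the proposal as written does not establish the statement.

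For comparison, the paper resolves the skew case by a quite different and much shorter mechanism. It argues that the affine normal lines then sweep out a ruled surface containing $S$, so that $S$ is ruled by its own affine normal lines; consequently the affine normal direction at each point is tangent to $S$, i.e. $\xi\cdot{\bf N}=0$, equivalently $(\nu_u\times\nu_v)\cdot{\bf N}=0$. Lemma \ref{formula-previous} then converts this into $({\bf N}_u\times{\bf N}_v)\cdot{\bf N}=0$, which is the vanishing of the determinant of the second fundamental form, hence $K\equiv 0$, contradicting the hypothesis. If you want to complete your write-up, the efficient repair is not the focal-surface frame computation but this tangency observation: once you know (or can justify) that in the skew case the affine normal at a generic point of $S$ is tangent to $S$, Lemma \ref{formula-previous} finishes the job in one line. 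As it stands, the skew case in your proposal is an unproved conjecture about an unstated system of PDEs, so the proof is incomplete.
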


\begin{proof} If ${\mathcal A}_1\cap {\mathcal A}_2=\{P\}$ then either all the affine normal lines pass through $P$, which satisfies the statement, or all these normals lie on a common plane, namely the plane containing ${\mathcal A}_1$ and ${\mathcal A}_2$. However in this last case, since $S$ is contained in the union of its affine normal lines, we conclude that $S$ is a plane, which cannot happen by hypothesis. This same argument shows that ${\mathcal A}_1$ and ${\mathcal A}_2$ cannot be parallel. So suppose that ${\mathcal A}_1$ and ${\mathcal A}_2$ are skew. Since all the affine normal lines intersect ${\mathcal A}_1$ and ${\mathcal A}_2$, the set of all the affine normal lines of $S$  must span a ruled surface, which contains $S$. Hence $S$ is itself ruled, and its rulings are the affine normal lines of $S$. Now given a point $p\in P$, the ruling through $p$ is perpendicular to the normal ${\bf N}$ to the surface $S$ at $p$. However the direction of the ruling is the direction of the affine normal, so it follows from Equation \eqref{affine-normal} that $\left(\nu_u\times \nu_v\right)\cdot {\bf N}=0$ at each point $p\in S$. By the identity in Eq. \eqref{long} from Lemma \ref{formula-previous}, we conclude that $\left({\bf N}_u\times {\bf N}_v\right)\cdot {\bf N}=0$. However, this equalility implies that the determinant of the second fundamental form of $S$ is identically zero (see Section 2-8 of \cite{Struik}). But then the Gaussian curvature of $S$ is identically zero, which is excluded by hypothesis. 
\end{proof} 

Lemma \ref{atmost}, together with Theorem \ref{th-imp}, implies the following result, which shows the connection between the existence of several axes of rotation, and the notion of an affine sphere. For the third statement of this theorem, we use the fact (see \cite{Loftin-2}) that every affine sphere of parabolic or elliptic type is a quadric. 

\begin{theorem}\label{isaffinesphere}
Let $S$ be an affine rotation surface whose Gaussian curvature is not identically zero. If $S$ has more than one axis of affine rotation, then:
\begin{itemize}
\item [(1)] All the axes of affine rotation of $S$ intersect at a single point $P$. 
\item [(2)] The surface $S$ is an affine sphere centered at $P$. 
\item [(3)] If $S$ is not a quadric, then $S$ is an affine sphere of hyperbolic type. 
\end{itemize}
\end{theorem}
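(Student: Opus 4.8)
The plan is to leverage Lemma~\ref{atmost} as the engine for all three statements, since that lemma already converts ``affine normal lines meeting two distinct lines'' into ``$S$ is an affine sphere.'' First I would observe that an affine rotation surface with axis $\mathcal{A}$ satisfies condition~(1) of Theorem~\ref{th-imp}: all its affine normal lines intersect $\mathcal{A}$. Hence if $S$ has two distinct axes of affine rotation $\mathcal{A}_1$ and $\mathcal{A}_2$, then every affine normal line intersects both $\mathcal{A}_1$ and $\mathcal{A}_2$. This is precisely the hypothesis of Lemma~\ref{atmost}, whose conclusion gives statements~(1) and~(2) simultaneously: the two axes meet at a single point $P$, and all the affine normal lines pass through $P$, so $S$ is an affine sphere centered at $P$.

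For statement~(1) in the case of more than two axes, I would argue that the conclusion propagates. Given any three axes $\mathcal{A}_1,\mathcal{A}_2,\mathcal{A}_3$, applying Lemma~\ref{atmost} to the pair $(\mathcal{A}_1,\mathcal{A}_2)$ produces the common center $P$ through which \emph{all} affine normal lines pass; since $S$ is not developable, the affine normal lines are not all coplanar, so they span a genuinely three-dimensional pencil through $P$, which forces every axis (each of which must be met by all affine normal lines) to pass through that same $P$. In particular $\mathcal{A}_3$ passes through $P$ as well, so all axes share the single common point $P$. I would spell out why $P$ is forced to be common to every axis: any axis is intersected by all affine normal lines, and a line meeting every member of a non-coplanar pencil of lines through $P$ must itself pass through $P$.

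For statement~(3), I would invoke the quoted classification fact from \cite{Loftin-2}: every algebraic affine sphere of parabolic or elliptic type is a quadric. By statement~(2), $S$ is an affine sphere, so it is of elliptic, hyperbolic, or parabolic type. If $S$ were elliptic or parabolic, it would be a quadric, contradicting the hypothesis that $S$ is not a quadric. Therefore the only remaining possibility is that $S$ is an affine sphere of hyperbolic type, which is exactly the claim. Here I am implicitly using the standing assumption that $S$ is algebraic (condition~(i) in the list following Theorem~\ref{th-imp}), which is needed to apply the algebraic classification.

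The main obstacle I anticipate is the propagation argument in statement~(1) when there are three or more axes, rather than exactly two. Lemma~\ref{atmost} directly handles only a pair of lines, so the delicate point is to show rigorously that a third axis is also forced through the \emph{same} point $P$, as opposed to merely meeting the affine normal lines somewhere. The clean way to handle this, which I would adopt, is to first extract from Lemma~\ref{atmost} the strong conclusion that \emph{all} affine normal lines concur at $P$ (this is an intrinsic property of $S$, independent of which pair of axes was used), and then note that \emph{any} axis must be met by all these concurrent normal lines; since these normal lines are not coplanar (as $K \not\equiv 0$ rules out $S$ being a plane), the only line meeting all of them is one through $P$, so every axis passes through $P$.
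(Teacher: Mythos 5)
Your proposal is correct and follows essentially the same route as the paper, which derives the theorem directly from Theorem~\ref{th-imp} (every axis is met by all affine normal lines) combined with Lemma~\ref{atmost}, and cites the classification in \cite{Loftin-2} for statement~(3). The only difference is that you spell out the propagation to three or more axes, a point the paper leaves implicit; your argument there (all normal lines concur at $P$ and are non-coplanar, so any line meeting them all passes through $P$) is sound.
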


The converse of Theorem \ref{isaffinesphere} is not necessarily true, because not every affine sphere is an affine rotation surface. Additionally, Theorem \ref{isaffinesphere} does not require that $S$ is an affine rotation surface of the same type about several different axes: Theorem \ref{isaffinesphere} also applies when $S$ is an affine rotation surface of different type (say, elliptic and hyperbolic) about two different axes. At the end of Section \ref{prelim-new} we already mentioned that there are affine rotation surfaces of hyperbolic and elliptic type about different axes, which therefore are affine spheres. There are also hyperbolic affine rotation surfaces about infinitely many axes (see Section 3 of \cite{AGHM16}), and elliptic affine rotation surfaces about infinitely many axes (Euclidean spheres). 

Next we want to explore if we can have multiple axes of affine rotation for parabolic affine rotation surfaces. In order to investigate this issue, we need to learn more about parabolic affine rotation surfaces. We study these surfaces in the next subsection.

\subsection{Parabolic affine rotation surfaces.}\label{subsec-parab}

Let us focus now on parabolic affine rotation surfaces, so assume that $F(x,y,z)=0$ defines a parabolic algebraic affine rotation surface about an axis ${\mathcal A}$. If $F(x,y,z)$ has degree $N$, then $F(x,y,z)$ can be written as 
\begin{equation}\label{decompos}
F(x,y,z)=F_N(x,y,z)+\cdots +F_i(x,y,z)+\cdots+F_0(x,y,z),
\end{equation}
where for $i=0,1,\ldots,N$, $F_i(x,y,z)$ is a homogeneous polynomial of degree $i$. The following results for parabolic affine rotation surfaces are analogous to the results for elliptic rotation surfaces provided by Lemma 1, Lemma 2 and Corollary 3 in Subsection 2.1 of \cite{AG14} and the results for hyperbolic rotation surfaces given by Lemma 3, Lemma 4 and Corollary 5 in Subsection 3.1 of \cite{AGHM16}. The proofs are analogous and are left to the reader. 

\begin{lemma} \label{forms-1}
$F_N(\bfx)$ defines a parabolic affine rotation surface about an axis ${\mathcal A}'$ through the origin parallel to ${\mathcal A}$.
\end{lemma}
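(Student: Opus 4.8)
## Proof Proposal for Lemma \ref{forms-1}

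The plan is to exploit the algebraic invariance characterization of parabolic affine rotation surfaces recalled just before the statement of Lemma \ref{formula-previous}: a parabolic affine rotation surface about the $z$-axis has an implicit equation of the form $F(\bfx)=\tilde{F}(p(\bfx),x)$, where $p(x,y,z)=y^2-2xz$ is the quadratic form preserved by the parabolic rotation group. More intrinsically, $F(x,y,z)=0$ defines a parabolic affine rotation surface about ${\mathcal A}$ precisely when $F$ is invariant (up to scalar) under the one-parameter group $\bfQ_{\alpha}$ associated to ${\mathcal A}$, i.e. $F(\bfQ_{\alpha}\bfx)=F(\bfx)$ for all $\alpha$, after translating ${\mathcal A}$ to pass through the origin. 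First I would reduce to the canonical situation where ${\mathcal A}$ is the $z$-axis and then handle the general axis by an affine change of coordinates, noting that the top-degree part transforms covariantly under such changes.

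The key observation is that the parabolic rotation matrix $\bfQ_{\alpha}$ in Eq. \eqref{types} is linear and, crucially, unimodular (an element of ${\bf SL}_3({\Bbb R})$), so it sends homogeneous polynomials of degree $i$ to homogeneous polynomials of degree $i$. Concretely, if I write the invariance $F(\bfQ_{\alpha}\bfx)=F(\bfx)$ and decompose both sides into homogeneous components as in Eq. \eqref{decompos}, then because $\bfQ_{\alpha}$ is a \emph{linear} substitution it preserves total degree, and therefore the degree-$i$ homogeneous part of $F(\bfQ_{\alpha}\bfx)$ equals $F_i(\bfQ_{\alpha}\bfx)$. Matching homogeneous components of the identity $F(\bfQ_{\alpha}\bfx)=F(\bfx)$ degree by degree yields
\[
F_i(\bfQ_{\alpha}\bfx)=F_i(\bfx)\qquad\text{for every }i=0,1,\ldots,N\text{ and every }\alpha.
\]
In particular $F_N(\bfQ_{\alpha}\bfx)=F_N(\bfx)$, so the leading form $F_N$ is itself invariant under the same parabolic rotation group, hence defines a parabolic affine rotation surface about the $z$-axis, which is the axis ${\mathcal A}'$ through the origin parallel to ${\mathcal A}$.

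For a general axis ${\mathcal A}$ not through the origin, I would conjugate: there is a translation (and orthogonal change, to bring ${\mathcal A}$ to canonical position) $T$ such that $S$ is invariant under $T^{-1}\bfQ_{\alpha}T$. Writing $G=F\circ T^{-1}$, the surface $T(S)$ is invariant under the canonical group, so by the homogeneous case its leading form $G_N$ is invariant under $\bfQ_{\alpha}$. The point to check is that passing to the leading form commutes appropriately with the affine map up to the linear part: a translation does not change the top-degree homogeneous component of a polynomial, so $F_N$ equals $G_N$ composed with the linear part of $T$, and hence $F_N$ is invariant under the group associated to the line through the origin parallel to ${\mathcal A}$, namely ${\mathcal A}'$. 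The main obstacle, such as it is, is purely bookkeeping: verifying that the leading homogeneous form is unaffected by the translation component of the change of coordinates and tracking that the resulting invariance group is exactly the one with axis ${\mathcal A}'$; the degree-matching argument itself is immediate from the linearity and unimodularity of $\bfQ_{\alpha}$. As the paper notes, the proof parallels Lemma 1 in \cite{AG14} and Lemma 3 in \cite{AGHM16}, so I would present it tersely.
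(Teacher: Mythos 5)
Your argument is correct and is essentially the intended one: the paper leaves this proof to the reader, deferring to the analogous Lemma 1 of \cite{AG14} and Lemma 3 of \cite{AGHM16}, which use exactly this device of passing from invariance of the surface to invariance of the polynomial (up to a scalar) under the one-parameter group, comparing homogeneous components degree by degree, and observing that the translation part of the change of coordinates does not affect the top-degree form, so that $F_N$ is invariant under the group conjugated by the linear part only, whose axis is the parallel line ${\mathcal A}'$ through the origin. The one step worth making explicit is that surface invariance only yields $F\circ T_\alpha=c(\alpha)\,F$ up to a scalar (via irreducibility and the two-dimensionality of the real part, hypotheses (ii)--(iii)), but this is harmless here since $F_N\circ Q_\alpha=c(\alpha)\,F_N$ still defines the same zero set.
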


\begin{lemma} \label{forms-2}
If the axis ${\mathcal A}$ passes through the origin, then for $i=1,2,\ldots,N$, $F_i(\bfx)$ defines a parabolic affine rotation surface about ${\mathcal A}$.
\end{lemma}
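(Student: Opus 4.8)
The plan is to exploit the homogeneity of the pieces $F_i$ together with the algebraic characterization of invariance under the parabolic rotation group. Recall from the discussion preceding Theorem \ref{th-general} that a surface invariant under parabolic rotation about the $z$-axis has defining polynomial of the form $\tilde{F}(p(\bfx),x)$ with $p(x,y,z)=y^2-2xz$, because the parabolic rotation preserves both the quadratic form $p$ and the $x$-coordinate. Since the axis $\mathcal{A}$ passes through the origin, I would first reduce to the canonical situation in which $\mathcal{A}$ is the $z$-axis, after a suitable linear change of coordinates $T$; because $T$ is linear and homogeneous-degree-preserving, it commutes with the decomposition \eqref{decompos}, so it suffices to prove the statement in canonical coordinates and then transport it back by $T$.

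The core idea is that invariance of $S$ under the rotation group is an identity $F(\bfQ_\alpha \bfx)=F(\bfx)$ for all $\alpha$, and I would differentiate this identity in the homogeneous grading. First I would write the invariance condition infinitesimally: differentiating $F(\bfQ_\alpha\bfx)=F(\bfx)$ with respect to $\alpha$ at $\alpha=0$ yields a first-order linear PDE $L F = 0$, where $L$ is the linear vector field (infinitesimal generator) associated to the parabolic matrix group, namely $L = x\,\partial_y + y\,\partial_z$ (read off from the off-diagonal entries $\tfrac{d}{d\alpha}\bfQ_\alpha|_{\alpha=0}$). The key observation is that $L$ is a \emph{first-order differential operator with homogeneous (degree-zero) coefficients}: each coefficient is linear in the variables, so $L$ maps homogeneous polynomials of degree $i$ to homogeneous polynomials of the same degree $i$. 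Therefore, applying $L$ to the graded decomposition \eqref{decompos} gives $LF = \sum_i L F_i$ as a sum of homogeneous pieces of distinct degrees, and $LF=0$ forces $L F_i = 0$ for every $i$ separately. Since $L F_i = 0$ is exactly the infinitesimal invariance condition, and the one-parameter group is connected, each $F_i$ is invariant under the full group $\bfQ_\alpha$, i.e.\ $F_i(\bfQ_\alpha\bfx)=F_i(\bfx)$.

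It remains to promote ``$F_i$ is invariant'' to ``$F_i(\bfx)=0$ defines a parabolic affine rotation surface about $\mathcal{A}$.'' Invariance under the group immediately says the zero set of $F_i$ is a union of orbits of $\bfQ_\alpha$, hence invariant under the rotation; I would then note that, as in the discussion leading to Theorem \ref{th-general}, invariance forces $F_i$ to be expressible as $\tilde{F}_i(p(\bfx),x)$, which is precisely the structural form of a parabolic affine rotation surface about the $z$-axis. Transporting back through $T^{-1}$ recovers the statement about $\mathcal{A}$.

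The main obstacle I anticipate is not the grading argument, which is clean, but the degenerate cases that prevent $F_i=0$ from genuinely being a \emph{surface} of the required type: some $F_i$ may vanish identically, or may factor in a way that the standing hypotheses (irreducibility, real two-dimensional zero set, nonvanishing Gaussian curvature in items (i)--(iv)) no longer hold for the individual piece. The honest reading is that the lemma asserts the \emph{algebraic} invariance property for each nonzero homogeneous component—each $F_i$ is either zero or its zero set is invariant under the parabolic rotation and has the canonical form $\tilde{F}_i(p(\bfx),x)$—and the phrase ``defines a parabolic affine rotation surface'' should be understood in that algebraic sense, matching the analogous elliptic and hyperbolic statements cited from \cite{AG14, AGHM16}. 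Since the paper explicitly states the proof is analogous to those references and left to the reader, I would present the grading-plus-invariance argument and simply flag that the degenerate components require no separate treatment because the invariance identity $F_i(\bfQ_\alpha\bfx)=F_i(\bfx)$ holds regardless.
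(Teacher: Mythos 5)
Your argument is correct in substance, and it reaches the conclusion by a slightly more elaborate route than the one the paper intends. The paper gives no proof of Lemma \ref{forms-2}, referring instead to the analogous statements in \cite{AG14,AGHM16}; the argument there is a direct grading comparison: once one knows $F(\bfQ_\alpha\bfx)=F(\bfx)$, the linearity of $\bfQ_\alpha$ (which is all that is used, so no reduction to canonical coordinates is even needed beyond noting that the group is linear because ${\mathcal A}$ passes through the origin) implies that each $F_i(\bfQ_\alpha\bfx)$ is again homogeneous of degree $i$, so the identity $\sum_i F_i(\bfQ_\alpha\bfx)=\sum_i F_i(\bfx)$ splits degree by degree into $F_i(\bfQ_\alpha\bfx)=F_i(\bfx)$. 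Your detour through the infinitesimal generator $L=x\,\partial_y+y\,\partial_z$ is valid --- indeed $\bfQ_\alpha=e^{\alpha M}$ with $M$ the strictly lower-triangular generator, $L$ preserves the degree of homogeneous polynomials, and $LF=0$ decomposes gradewise --- but it proves the same thing at the cost of having to re-integrate the flow at the end; the direct comparison gives the conclusion in one line and transfers verbatim to the elliptic and hyperbolic groups without computing a generator. Your reading of ``defines a parabolic affine rotation surface'' in the algebraic sense for possibly degenerate components $F_i$ is the right one and matches the cited analogues.

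The one step you should not gloss over is the very first one, the identity $F(\bfQ_\alpha\bfx)=F(\bfx)$. In the paper this is Proposition \ref{const-one}, and the proof given there for the forward implication invokes Corollary \ref{cor-forms} and Theorem \ref{th-general}, both of which sit downstream of Lemma \ref{forms-2}; quoting it here would be circular. From the hypothesis you only have set-theoretic invariance of $S=\{F=0\}$ under the group, and you must upgrade it: since $\bfQ_\alpha$ is an invertible linear map, $F(\bfQ_\alpha\bfx)$ has the same degree as $F$ and vanishes exactly on $S$, so by irreducibility of $F$ and the fact that the real part of $S$ is two-dimensional (hypotheses (ii) and (iii)) one gets $F(\bfQ_\alpha\bfx)=c(\alpha)F(\bfx)$ for a scalar $c(\alpha)$; the coefficients of $F(\bfQ_\alpha\bfx)$ are polynomials in $\alpha$, so $c$ is a polynomial with $c(\alpha+\beta)=c(\alpha)c(\beta)$ and $c(0)=1$, forcing $c\equiv 1$. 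With that short argument inserted, your proof is complete and non-circular.
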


\begin{corollary} \label{cor-forms}
$F(\bfx)$ defines a parabolic affine rotation surface about an axis passing through the origin if and only if for $i=1,2,\ldots,N$, $F_i(\bfx)$ defines a parabolic affine rotation surface about the same axis passing through the origin.
\end{corollary}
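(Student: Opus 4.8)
The plan is to prove Corollary~\ref{cor-forms} as a direct consequence of the two preceding lemmas, so the main work is organizing the logical structure rather than performing new geometric computations. The key observation is that parabolic affine rotation about a fixed axis $\mathcal{A}$ through the origin is implemented by the one-parameter matrix group $\bfQ_\alpha$ (the right-most family in Eq.~\eqref{types}, in suitable coordinates), and that each $\bfQ_\alpha$ is a \emph{linear} map. Because $\bfQ_\alpha$ is linear, it preserves the degree of any homogeneous polynomial: if $F_i$ is homogeneous of degree $i$, then $F_i \circ \bfQ_\alpha$ is again homogeneous of degree $i$.

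For the forward implication $(\Rightarrow)$, I would simply invoke Lemma~\ref{forms-2}: if $F(\bfx)$ defines a parabolic affine rotation surface about an axis through the origin, then each homogeneous component $F_i(\bfx)$, $i=1,\ldots,N$, defines a parabolic affine rotation surface about that same axis. This is precisely the content of Lemma~\ref{forms-2}, so this direction is immediate once the lemma is granted.

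For the converse $(\Leftarrow)$, suppose each $F_i$, $i=1,\ldots,N$, defines a parabolic affine rotation surface about a common axis $\mathcal{A}$ through the origin. Invariance of a surface under the affine rotation group means that $F_i(\bfx)=0$ if and only if $F_i(\bfQ_\alpha \bfx)=0$ for all $\alpha$; at the level of polynomials (using irreducibility or the structure of the invariant, as in the referenced proofs of \cite{AG14,AGHM16}) this is captured by the stronger functional statement that $F_i(\bfQ_\alpha\bfx)$ is a scalar multiple of $F_i(\bfx)$. Since $\bfQ_\alpha$ is linear and the $F_i$ are the distinct-degree homogeneous pieces of $F$, applying $\bfQ_\alpha$ to $F$ acts degree-by-degree: the degree-$i$ part of $F(\bfQ_\alpha\bfx)$ is exactly $F_i(\bfQ_\alpha\bfx)$. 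Thus $F(\bfQ_\alpha\bfx)=\sum_i F_i(\bfQ_\alpha\bfx)$, and because each summand is invariant (preserved by the group action in the appropriate sense), the whole sum $F(\bfx)=\sum_i F_i(\bfx)$ is invariant as well. Hence $F(\bfx)$ defines a parabolic affine rotation surface about the same axis through the origin.

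The step I expect to be the main obstacle is making precise the sense in which ``$F_i$ defines a parabolic affine rotation surface'' transfers to an exact algebraic invariance identity that can be summed. The subtlety is that set-theoretic invariance of each zero locus $\{F_i=0\}$ does not literally force $F_i(\bfQ_\alpha\bfx)=c(\alpha)F_i(\bfx)$ unless one argues via irreducibility or the explicit form of the invariant quadratic $p(x,y,z)=y^2-2xz$ together with the structural description $F=\tilde F(p,x)$ noted before Theorem~\ref{th-general}. In practice this is exactly the mechanism already worked out in the analogous elliptic and hyperbolic proofs of \cite{AG14,AGHM16}, so I would lean on those, but one must check that the degree-separating argument is clean: because distinct homogeneous degrees cannot cancel one another after the linear substitution $\bfx \mapsto \bfQ_\alpha\bfx$, invariance of $F$ is equivalent to simultaneous invariance of all its components, which gives both directions at once. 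This is why the corollary follows so directly from Lemmas~\ref{forms-1} and~\ref{forms-2}, and why the authors can reasonably leave the routine verification to the reader.
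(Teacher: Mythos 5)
Your argument is correct and follows essentially the route the paper intends: the forward direction is exactly Lemma~\ref{forms-2}, and the converse is the standard degree-by-degree argument (linear substitution preserves homogeneous degree, so exact invariance of each $F_i$ under $\bfQ_\alpha$ sums to invariance of $F$), which is precisely the mechanism of the cited elliptic and hyperbolic analogues that the paper defers to. You also correctly flag the only real subtlety --- passing from set-theoretic invariance of each $\{F_i=0\}$ to the polynomial identity $F_i\circ\bfQ_\alpha=F_i$ via the structural form $F_i=\tilde F_i(y^2-2xz,x)$ --- so nothing essential is missing.
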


Furthermore, recall from Section \ref{prelim-new} that if $S$ is a parabolic affine rotation surface about the axis ${\mathcal A}$, the parallel curves of $S$ are parabolas whose axes of symmetry are, all of them, parallel to ${\mathcal A}$. Furthermore, all these parabolas lie on planes normal to a fixed direction, that we will denote by $L_{\mathcal A}$, perpendicular to ${\mathcal A}$. We will refer to the direction of $L_{\mathcal A}$ as the \emph{normal direction}. Now we have the following theorem, similar to Theorem 6 in \cite{AG14} and Theorem 6 in \cite{AGHM16}.

\begin{theorem}\label{th-general}
Let $F(\bfx)$ define a parabolic affine rotation surface with axis ${\mathcal A}$, and let $F_N(\bfx)$ be the form of highest degree of $F(\bfx)$. Then:
\begin{itemize}
\item [(1)] $F_N(\bfx)$ factors completely into a single real plane $\Pi$ and several quadrics that can be either real or imaginary, possibly with multiplicity in both cases. The real plane and the quadrics may or may not be present in the factorization.
\item [(2)] The real quadrics are elliptic or circular cones, with vertex at the origin. The elliptic cones have one axis of symmetry normal to both ${\mathcal A}$ and $L_{\mathcal A}$. The axis of revolution of the circular cones forms an angle of $\frac{\pi}{4}$ with ${\mathcal A}$. 
\item [(3)] For the non-real quadrics, if any, the associated matrix has one real eigenvalue, whose associated eigenvector is normal to both ${\mathcal A}$ and $L_{\mathcal A}$.
\item [(4)] The direction of the normal vector to the plane $\Pi$ is normal to ${\mathcal A}$. The planes parallel to $\Pi$ provide the family of parallel curves of $F(\bfx)=0$, which are parabolas whose axes of symmetry are parallel to ${\mathcal A}$. 
    \end{itemize}
\end{theorem}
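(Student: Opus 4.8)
The plan is to work in the canonical coordinate system of Eq. \eqref{types}, in which ${\mathcal A}$ is the $z$-axis, the normal direction $L_{\mathcal A}$ is the $x$-axis, and the parabolic rotation group $\bfQ_{\alpha}$ preserves both the coordinate $x$ and the quadratic form $p(\bfx)=y^2-2xz$. By Lemma \ref{forms-1}, $F_N(\bfx)$ itself defines a parabolic affine rotation surface about an axis ${\mathcal A}'$ through the origin parallel to ${\mathcal A}$, so it suffices to analyze $F_N$ and I may assume the axis passes through the origin. As recalled in Section \ref{prelim-new}, invariance under the parabolic group forces $F_N(\bfx)=\tilde G(p(\bfx),x)$ for a bivariate polynomial $\tilde G$. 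Since $F_N$ is homogeneous of degree $N$, while $p$ and $x$ are homogeneous of degrees $2$ and $1$, only the weighted-homogeneous part of $\tilde G$ survives, whence
\begin{equation*}
F_N(\bfx)=\sum_{2i+j=N} c_{ij}\,\bigl(y^2-2xz\bigr)^i x^j .
\end{equation*}
Deriving this normal form cleanly is the first key step, as it is the bridge between the invariance hypothesis and the factorization.

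Next I would prove (1) by factoring this expression. In the sum $j\equiv N\pmod 2$, so every monomial shares a factor $x^{\varepsilon}$ with $\varepsilon=N\bmod 2$; dividing it out leaves a polynomial in which every remaining power of $x$ is even, i.e. a binary form of degree $(N-\varepsilon)/2$ in $A=y^2-2xz$ and $B=x^2$. Factoring this binary form into linear factors over $\CC$ yields
\begin{equation*}
F_N(\bfx)=c\,x^{\varepsilon}\prod_{k}\bigl(y^2-2xz-\lambda_k x^2\bigr),\qquad \varepsilon=N\bmod 2,
\end{equation*}
with $c$ a nonzero constant and $\lambda_k\in\CC$; the product is empty (no quadrics) when $(N-\varepsilon)/2=0$. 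Because $F_N$ has real coefficients, the non-real $\lambda_k$ occur in conjugate pairs, and repeated $\lambda_k$ account for the multiplicities. The plane $\Pi:\,x=0$ appears with multiplicity $\varepsilon\le 1$ and is the only linear factor, since the matrix of each quadric $Q_\lambda:=y^2-2xz-\lambda x^2$ has determinant $-1\neq 0$, so $Q_\lambda$ is a non-degenerate (hence irreducible) cone with vertex at the origin, not divisible by $x$. This establishes (1), with $\Pi$ the single real plane.

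To obtain (2) and (3) I would analyze the symmetric matrix
\begin{equation*}
M_\lambda=\begin{pmatrix}-\lambda & 0 & -1\\ 0 & 1 & 0\\ -1 & 0 & 0\end{pmatrix}
\end{equation*}
of $Q_\lambda$. One eigenvalue is $1$, with eigenvector $[0,1,0]^T$, i.e. the direction normal to both ${\mathcal A}$ and $L_{\mathcal A}$; the other two are the roots $t_\pm$ of $t^2+\lambda t-1=0$, whose eigenvectors lie in the $xz$-plane. If $\lambda\in\RR$ then $t_+t_-=-1<0$ forces signature $(+,+,-)$, so $Q_\lambda$ is a real cone; it is a cone of revolution exactly when two eigenvalues coincide, which by $1+\lambda-1=0$ happens only for $\lambda=0$, in which case $Q_0=p=y^2-2xz$ is circular about the eigendirection $[1,0,1]^T$ of the eigenvalue $-1$, a direction making angle $\pi/4$ with ${\mathcal A}$. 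For real $\lambda\neq 0$ the cone is elliptic and $[0,1,0]^T$ is its principal axis of symmetry, normal to both ${\mathcal A}$ and $L_{\mathcal A}$, which gives (2). If $\lambda$ is non-real then $t_\pm$ are non-real, so $1$ is the only real eigenvalue, with eigenvector again $[0,1,0]^T$ normal to both ${\mathcal A}$ and $L_{\mathcal A}$, which gives (3).

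Finally, (4) follows directly from the normal form: the plane $\Pi:\,x=0$ has normal $[1,0,0]^T=L_{\mathcal A}$, perpendicular to ${\mathcal A}$, and the sections of $F_N=0$ by the parallel planes $x=\text{const}$ are precisely the parallel curves, which by the description in Section \ref{prelim-new} are parabolas with axes of symmetry parallel to ${\mathcal A}$. The main obstacle is not any single computation --- the factorization and the eigenvalue analysis are routine --- but rather the reduction to the weighted-homogeneous normal form $F_N=\sum c_{ij}(y^2-2xz)^i x^j$ together with the careful bookkeeping that translates the eigenstructure of $M_\lambda$ into the stated geometric statements about the axes of the cones, in particular reconciling the common symmetry axis $[0,1,0]^T$ of the elliptic and imaginary quadrics with the distinguished revolution axis $[1,0,1]^T$ of the circular case.
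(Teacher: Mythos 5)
Your overall route is the same as the paper's: reduce to the canonical situation via Lemma \ref{forms-1}, use the invariance $F_N=\tilde G(y^2-2xz,x)$ together with homogeneity to get the weighted-homogeneous normal form, factor into powers of $x$ and quadrics $y^2-2xz-wx^2$, and then read off (2)--(4) from the eigenstructure of the associated matrix exactly as in Appendix I. Parts (2), (3) and (4) of your argument are correct.

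There is, however, a genuine error in your factorization step for (1). You claim that after extracting $x^{\varepsilon}$ with $\varepsilon=N\bmod 2$ the remainder is a binary form in $A=y^2-2xz$ and $B=x^2$ whose linear factors can all be written as $A-\lambda_k B$, so that the plane $x=0$ appears with multiplicity $\varepsilon\le 1$. This is false: the binary form can have $B$ itself as a factor (a ``root at infinity'' in $\lambda$), which contributes extra copies of $x^2$ to the plane rather than a quadric. Your formula $F_N=c\,x^{\varepsilon}\prod_k(y^2-2xz-\lambda_k x^2)$ therefore breaks down whenever the coefficient of the top power of $y^2-2xz$ vanishes. The paper's own worked example already refutes your claim: there $\tilde F=x^2(y^2-2xz-x^2)+x-1$, whose leading form is $F_N=x^2\,(y^2-2xz-x^2)$, i.e.\ the binary form is $B(A-B)$ and the plane occurs with multiplicity $2$ even though $N=4$ and $\varepsilon=0$. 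Note also that the theorem itself allows the plane to occur ``possibly with multiplicity,'' which your proof contradicts. The paper avoids this by writing $F_N=(y^2-2xz)^{p-j}x^{q}\,[\,a_jx^{2j}+\cdots+a_0(y^2-2xz)^j\,]$ with $a_0,a_j\neq 0$, so that $x^{q}$ (with $q=N-2p$ arbitrary, not just $0$ or $1$) carries the full multiplicity of the plane before the substitution $y^2-2xz=wx^2$ is made in the bracket. The fix to your argument is easy --- allow factors $B^m$ in the binary form and absorb them into the plane's multiplicity --- but as written the factorization, and hence your proof of (1), is incorrect.
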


\begin{proof} By Lemma \ref{forms-1}, $F_N(\bfx)$ defines a parabolic affine rotation surface about a line ${\mathcal A}'$ through the origin, parallel to ${\mathcal A}$. Since the nature of the factors of 𝐹$F_N(\bfx)$ is not affected by a rigid motion, we can begin by assuming that ${\mathcal A}'$ is the $z$-axis, and that $F_N(\bfx)$ is invariant under the matrix group in Eq. \eqref{final}. Now recall from Section \ref{prelim-new} that $F_N(\bfx)=\tilde{F}(y^2-2xz,x)$. Hence since $F_N(\bfx)$ is algebraic, $F_N(\bfx)$ can be written as
\[
\begin{array}{l}
F_N(\bfx)=a_0(y^2-2xz)^p x^q+a_1(y^2-2xz)^{p-1}x^{q+2}+\cdots\\
\hspace{1.3 cm}+a_j(y^2-2xz)^{p-j}x^{q+2j},
\end{array}
\]for some $j\in\{0,1,\ldots,p\}$ where $2p+q=N$. Therefore we deduce that
\begin{equation}\label{eq-fac}
\begin{array}{l}
F_N(\bfx)=(y^2-2xz)^{p-j}\cdot x^q\cdot [a_j x^{2j}+\cdots\\
\hspace{1.3 cm}+a_1(y^2-2xz)^{j-1}x^2+a_0 (y^2-2xz)^j ].
\end{array}
\end{equation}
The first factor, of multiplicity $p-j$, corresponds to a real cone of revolution with vertex at the origin, and whose axis is the line $\{x=z,y=0\}$ (see Appendix I). The second factor, of multiplicity $q$, is the $yz$-plane, normal to the $x$-axis.
As for the last polynomial, we shall seek factors for this polynomial of the form $y^2-2xz-wx^2$. A thorough analysis of the surface implicitly defined by such a quadratic form is carried out in Appendix I. In particular, if $w$ is real, $y^2-2xz-wx^2=0$ corresponds to a real, elliptic or circular cone with vertex at the origin, with a symmetry axis normal to the $z$-axis, see Appendix I. To determine the values of $w$ for which $y^2-2xz-wx^2$ is a factor of \eqref{eq-fac} we substitute $$y^2-2xz=wx^2$$ into the last polynomial in \eqref{eq-fac}, and after factoring out $x^{2j}$ we get
\[a_j+\cdots+a_1w^{j-1}+a_0w^j=0.\]This expression is a univariate polynomial in $w$, so this polynomial has $j$ solutions, possibly with multiplicity, possibly complex. Since the nature of the factors is not affected by a rigid motion, and taking into account the analysis done in Appendix I, the theorem follows. 
\end{proof}

\begin{corollary} \label{not-possible}
If $S$ is a parabolic affine rotation surface about an axis ${\mathcal A}$, then $S$ cannot be an affine rotation surface of a different type about the same axis ${\mathcal A}$. 
\end{corollary}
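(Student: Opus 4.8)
The plan is to translate the two rotational invariances into first-order differential conditions and then show that, once combined, they force $S$ to be a cylinder, hence developable, in contradiction with the standing hypothesis (iv). So I would argue by contradiction: suppose $S$ is parabolic about $\mathcal{A}$ and, simultaneously, elliptic or hyperbolic about the same axis $\mathcal{A}$.

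First I would place $\mathcal{A}$ along the $z$-axis by a rigid motion and record each invariance infinitesimally. Differentiating the relevant group in Eq.~\eqref{types} at $\alpha=0$ produces the generating vector field, whose action must annihilate $F$: the parabolic case gives $xF_y+yF_z=0$, the elliptic case gives $xF_y-yF_x=0$, and the hyperbolic case gives $yF_x+xF_y=0$ (equivalently, these are just the conditions $F=\tilde F(y^2-2xz,x)$ and $F=\hat F(x^2\pm y^2,z)$ already recorded in Section~\ref{prelim-new}). Next I would subtract the parabolic condition from the elliptic (resp.\ hyperbolic) one. The mixed term $xF_y$ cancels, and what remains factors as a nonzero linear form times a constant-coefficient field: the elliptic case yields $-y(F_x+F_z)=0$ and the hyperbolic case yields $y(F_z-F_x)=0$. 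Since $y\not\equiv0$ in the polynomial ring, this forces $F_x+F_z=0$ (resp.\ $F_x-F_z=0$), i.e.\ $F$ is annihilated by the translation field $\partial_x\pm\partial_z$.

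Hence $F$ depends only on $y$ and on $x\mp z$, so $S=\{F=0\}$ is a cylinder whose rulings are all parallel to the fixed direction $(1,0,\pm1)$. A cylinder is developable, so its Gaussian curvature vanishes identically; this contradicts assumption (iv) (and in any event degenerates $S$), which closes the argument.

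The step I expect to be the main obstacle is the reduction to a single coordinate system in which both groups are simultaneously in canonical form, since only then does the cancellation of $xF_y$ genuinely occur. For the elliptic case this is automatic: the elliptic group is rotationally symmetric in the plane transverse to $\mathcal{A}$, so I am free to choose that transverse frame to be the one putting the parabolic group in canonical form. The delicate case is hyperbolic versus parabolic, because the hyperbolic group distinguishes the directions of the major and minor axes of its sections while the parabolic group distinguishes the normal direction $L_{\mathcal{A}}$; these transverse frames need not coincide a priori, and if they are rotated relative to one another the difference of the two operators no longer factors through a single translation field. To settle this case I would either show that invariance under both groups forces the two transverse frames to align, or fall back on the coordinate-free content of Theorem~\ref{th-general}: the leading form $F_N$ is fixed and its real factors carry intrinsic axes and normals, yet Theorem~\ref{th-general} places the cone axes at $45^\circ$ to $\mathcal{A}$ or normal to $\mathcal{A}$ and the plane normal perpendicular to $\mathcal{A}$, in conflict with the elliptic/hyperbolic factorizations (cones of revolution about $\mathcal{A}$, or the asymptotic-plane pair) established in \cite{AG14, AGHM16}.
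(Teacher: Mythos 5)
Your infinitesimal-generator computation is correct as far as it goes, and for the \emph{elliptic} case it yields a complete argument genuinely different from the paper's: since every rotation about the $z$-axis conjugates the elliptic group to itself, the two annihilation conditions $xF_y+yF_z=0$ and $xF_y-yF_x=0$ really do hold in a common frame; they hold identically on $\RR^3$ (by Proposition \ref{const-one}, or equivalently because $F=\tilde F(y^2-2xz,x)$ and $F=\hat F(x^2+y^2,z)$ are polynomial identities), so their difference gives $F_x+F_z\equiv 0$ and the resulting cylinder contradicts hypothesis (iv). In fact, substituting $F=G(y,x-z)$ back into either condition forces $G$ to be constant, so your route proves slightly more than you claim in that case.

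The hyperbolic half, however, is left with exactly the gap you flag, and it is a genuine one. The paper's own proof of this corollary allows ``a rotation about the $z$-axis'' before the parabolic group is canonical, so after fixing the frame adapted to the parabolic structure the hypothetical hyperbolic group is only known to be $R_\theta H_\alpha R_\theta^{-1}$ for some unknown $\theta$; its generator gives $(\cos2\theta\,y-\sin2\theta\,x)F_x+(\cos2\theta\,x+\sin2\theta\,y)F_y=0$, and the term $xF_y$ cancels against the parabolic condition only when $\sin 2\theta=0$. Neither of your proposed repairs is carried out: you do not prove that the two transverse frames must align, and your fallback (b) is essentially the paper's own argument. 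The paper sidesteps the frame problem by using a rotation-invariant criterion: by Theorem \ref{th-general} each homogeneous form $F_i$ factors into $x$, $y^2-2xz$ and $y^2-2xz-wx^2$, and the sections of these factors by planes $z=c$ are lines, parabolas, or hyperbolas centered at $\left(\tfrac{2c}{w},0,c\right)$ --- never circles, nor rectangular hyperbolas centered at $(0,0,c)$, whatever the transverse frame. To make your proof self-contained you must either close the alignment step (for instance by bracketing the two generators and showing the Lie algebra they generate is too large unless $\theta=0$) or replace the hyperbolic half by a cross-section argument of the paper's type.
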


\begin{proof} Without loss of generality, we can assume that the axis of affine rotation of $S$ is the $z$-axis. Then each homogeneous form $F_i(x,y,z)$ of $F(x,y,z)$, after perhaps a rotation about the $z$-axis, factors as shown in Theorem \ref{th-general}. If $S$ is also a surface of revolution about the $z$-axis then the cross sections of $F_i(x,y,z)=0$ with planes $z=c$ are circles, which is impossible. If $S$ is also a hyperbolic affine rotation surface about the $z$-axis then the cross sections of $F_i(x,y,z)=0$ with planes $z=c$ are rectangular hyperbolas centered at the point $(0,0,c)$. However, even if $F_i(x,y,z)$ does not have linear factors, this is not possible, because the curves $\{y^2-2cx-wx^2=0,z=c\}$ are not rectangular hyperbolas: if $w=0$ the curve is a parabola, and if $w\neq 0$ then the curve is a hyperbola centered at the point $\left(\frac{2c}{w},0,c \right)$.
\end{proof}

In fact, excluding planes, which are the only surfaces which may be simultaneously affine rotation surfaces of elliptic and hyperbolic type about the same axis (see Lemma 15 in \cite{AGHM16}), two algebraic surfaces cannot be an affine rotation surface of two different types about the same axis ${\mathcal A}$.

\begin{remark}\label{pi} It can happen that a parabolic affine rotation surface about an axis is simultaneously an affine rotation surface of another type about another axis. For instance, the surface $S$ defined by $y^2-2xz=0$ is a parabolic affine rotation surface about the $z$-axis. But applying the following orthogonal change of coordinates,
\[
x:=\frac{1}{\sqrt{2}}\tilde{x}-\frac{1}{\sqrt{2}}\tilde{z},\mbox{ }y:=\tilde{y},\mbox{ }z:=\frac{1}{\sqrt{2}}\tilde{x}+\frac{1}{\sqrt{2}}\tilde{z},
\]
the surface is mapped to $\tilde{x}^2=\tilde{y}^2+\tilde{z}^2$, where we recognize (see Section 3.2 of \cite{AGHM16}) a hyperbolic affine rotation surface, about \emph{any} line through the origin contained in the plane $\tilde{x}=0$, i.e. \emph{any} line through the origin contained in the plane $x+z=0$. Additionally, $\tilde{x}^2=\tilde{y}^2+\tilde{z}^2$ is also a surface of revolution about the $\tilde{x}$-axis, i.e. an elliptic affine rotation surface about the line $\{y=0,x-z=0\}$. So $S$ is an affine rotation surface of elliptic, parabolic, and hyperbolic type, about three different axes of affine rotation. 
\end{remark}

\begin{remark}\label{how-to-find}
In order to fully understand the geometry of a parabolic affine rotation surface, we need to know not only the axis ${\mathcal A}$, but also the normal direction $L_{\mathcal A}$. Theorem \ref{th-general}, jointly with the results in Appendix I, provides certain clues for how to find the directions of ${\mathcal A}$ and $L_{\mathcal A}$ from the implicit equation $F(x,y,z)=0$, and in some cases, a complete method to do so:
\begin{itemize}
\item If the form $F_N(x,y,z)$ of highest degree has some linear factor, $L_{\mathcal A}$ is normal to the plane $\Pi$ defined by this linear factor. Furthermore, the intersections of $S$ with $\Pi$ are parabolas, and the axes of symmetry of these parabolas provide the direction of ${\mathcal A}$.
\item If the form $F_N(x,y,z)$ has a quadratic factor whose associated matrix $A$ is diagonalizable and has three different eigenvalues, or has two different eigenvalues but is not diagonalizable, $\lambda=1$ must be a simple eigenvalue of the matrix $A$, and the directions of both ${\mathcal A},L_{\mathcal A}$ lie on the plane normal to the eigenvector associated with $\lambda=1$.
\item If the form $F_N(x,y,z)$ has a quadratic factor whose associated matrix $A$ is diagonalizable and has two different eigenvalues, $\lambda=-1$ must be a simple eigenvalue of $A$, and the directions of both ${\mathcal A},L_{\mathcal A}$ lie on the plane normal to the eigenvector associated with $\lambda=-1$.
\end{itemize}
\end{remark}

Next we want to explore whether or not it is possible for a surface to be a parabolic affine rotation surface about two different axes. But first we need some preliminary results. The following lemma is a consequence of Theorem \ref{th-general}. Here, we say that ${\mathcal A}$ is a \emph{parabolic} axis of rotation of $S$, if $S$ is a parabolic affine rotation surface about ${\mathcal A}$; similarly, we can speak about \emph{elliptic} or \emph{hyperbolic} axes of rotation. 

\begin{lemma}\label{axis-z}
Let $F(x,y,z)$ define a parabolic affine rotation surface $S$ whose Gaussian curvature is not identically zero, with $F(x,y,z)$ as in Eq. \eqref{decompos}, invariant under the right-most matrix group in Eq. \eqref{types} (in particular, notice that the $z$-axis is a parabolic axis of rotation of $S$). If $S$ has another parabolic axis of rotation ${\mathcal A}'$, different from the $z$-axis, then: (1) ${\mathcal A}'$ intersects the $z$-axis at the origin; (2) the homogeneous forms $F_i(x,y,z)$ of $F(x,y,z)$ have no linear factors. 
\end{lemma}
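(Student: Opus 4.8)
The plan is to derive both statements from the affine-sphere rigidity of Lemma~\ref{atmost} together with the factorization of the top form provided by Theorem~\ref{th-general}, after first normalizing the position of the intersection point.

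I would begin with statement (1). Since $S$ is a parabolic affine rotation surface about the $z$-axis $\mathcal{A}$ and about $\mathcal{A}'$, Theorem~\ref{th-imp} guarantees that every affine normal line of $S$ meets both $\mathcal{A}$ and $\mathcal{A}'$. As the Gaussian curvature of $S$ is not identically zero, Lemma~\ref{atmost} then forces $\mathcal{A}$ and $\mathcal{A}'$ to meet in a single point $P$ (so that they are neither parallel nor skew) and shows that $S$ is an affine sphere centered at $P$. Because $P\in\mathcal{A}$, we have $P=(0,0,z_0)$ for some $z_0$. A one-line check shows that the canonical parabolic group in Eq.~\eqref{types} commutes with translations along the $z$-axis, so translating $S$ by $(0,0,-z_0)$ leaves it invariant under the same group and keeps the $z$-axis as a parabolic axis, while moving $P$ to the origin; this normalization establishes that $\mathcal{A}'$ meets the $z$-axis at the origin, which is statement (1).

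For statement (2) I would exploit that, after the normalization above, both $\mathcal{A}$ and $\mathcal{A}'$ pass through the origin. Corollary~\ref{cor-forms}, applied to each axis in turn, then shows that every homogeneous form $F_i$ with $i\ge 1$ is itself a parabolic affine rotation surface about both $\mathcal{A}$ and $\mathcal{A}'$. Suppose, for contradiction, that some $F_i$ admits a linear factor. Regarding $F_i$ as equal to its own top form and applying Theorem~\ref{th-general}(1) and (4) with respect to $\mathcal{A}$, this linear factor is unique up to a scalar, and the plane $\Pi$ it defines is normal to the normal direction $L_{\mathcal{A}}$, while the sections of $F_i=0$ by the planes parallel to $\Pi$ are parabolas whose axes of symmetry are parallel to $\mathcal{A}$. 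Running the same statement with respect to $\mathcal{A}'$ and using the uniqueness of the linear factor forces $L_{\mathcal{A}}=L_{\mathcal{A}'}$ as directions; hence one and the same family of parallel sections would consist of parabolas whose axes of symmetry are parallel to $\mathcal{A}$ and, simultaneously, parallel to $\mathcal{A}'$. Since a parabola has a unique axis-of-symmetry direction, this yields $\mathcal{A}\parallel\mathcal{A}'$, contradicting the concurrency and distinctness of $\mathcal{A}$ and $\mathcal{A}'$ obtained above. Therefore no $F_i$ has a linear factor.

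I expect the delicate points to be twofold. First, one must notice that the normalization of $P$ to the origin is precisely what makes Corollary~\ref{cor-forms} available for every form $F_i$ at once: without placing both axes through the origin the individual forms need not inherit the second parabolic structure, and indeed before normalization the forms may acquire a spurious linear factor. Second, in the last paragraph one must be sure that the parallel sections produced by Theorem~\ref{th-general}(4) are genuine, non-degenerate parabolas, so that the uniqueness of the axis of symmetry can legitimately be invoked; here I would lean on the explicit description of the quadratic factors $y^2-2xz-wx^2$ and their plane sections carried out in Appendix~I to rule out degenerate cases.
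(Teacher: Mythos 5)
Your part (2) is essentially the paper's argument in different clothing: the paper also reduces everything to showing $L_{{\mathcal A}'}=L_{\mathcal A}$ (it does so by observing that a cutting plane not parallel to the plane factor would meet it in a line, whereas you use the uniqueness of the single real plane in Theorem \ref{th-general}(1); both routes work), and then both arguments force ${\mathcal A}'\parallel{\mathcal A}$ from the uniqueness of a parabola's axis direction and derive a contradiction with concurrency. The genuine problem is in part (1). The paper proves (1) directly, arguing that some non-constant homogeneous form $F_j$ defines a cone with vertex at the origin, so that the rotations about ${\mathcal A}'$ must fix the origin. You instead use Theorem \ref{th-imp} and Lemma \ref{atmost} to conclude that ${\mathcal A}$ and ${\mathcal A}'$ meet at a point $P=(0,0,z_0)$ on the $z$-axis -- correct so far -- and then translate $S$ by $(0,0,-z_0)$. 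That last step does not prove $z_0=0$: it replaces $F$ by a different polynomial with a different homogeneous decomposition, and therefore establishes statements (1) and (2) only for the translated surface, not for the given $F$.

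The normalization cannot be waved through as a ``without loss of generality,'' because the hypotheses of the lemma are invariant under translation along the $z$-axis (the canonical parabolic group commutes with such translations, as you observe), while both conclusions are not. Concretely, take $G(x,y,z)=y^2-2xz+2x-x^2-1$, the image of the two-axis surface $y^2-2xz-x^2=1$ (Corollary \ref{cor-inf-axes}) under $(x,y,z)\mapsto (x,y,z+1)$: it is irreducible, real, non-developable, and invariant under the right-most group of Eq. \eqref{types}, yet its second parabolic axis $\{y=0,\; x+2z-2=0\}$ meets the $z$-axis at $(0,0,1)$, and its degree-one form is $2x$. Your own closing remark, that before normalization the forms ``may acquire a spurious linear factor,'' is precisely conclusion (2) failing for the original $F$; so what you have written is a proof of a normalized variant of the lemma (which is in fact what Theorem \ref{th-parab} needs, since there the extra translation is absorbed into the similarity), not of the lemma as stated. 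To prove the statement as written you would need an argument that pins $P$ to the origin using the homogeneous structure of the given $F$ -- this is what the paper's cone-vertex argument attempts -- rather than an argument that moves $P$ there.
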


\begin{proof} Since by hypothesis $S$ is not a plane, there must exist some non-constant form $F_j(x,y,z)$ not depending only on $x$. Since $F_j(x,y,z)$ is homogeneous, $F_j(x,y,z)$ defines a conical surface with vertex at the origin. Thus, every linear transformation, and in particular every parabolic affine rotation, must preserve the origin, so ${\mathcal A}'$ must also contain the origin, and (1) follows.

Now let us see (2). From the proof of Theorem \ref{th-general}, the only possible linear factor of $F_i(x,y,z)$ is $x$. Suppose that some $F_i(x,y,z)$ has $x$ as a factor. Then ${\mathcal A}'$ must lie on the plane $x=0$, otherwise the intersections of $F_i(x,y,z)=0$ with the planes normal to $L_{{\mathcal A}'}$ would contain a line (the intersection with $x=0$), and therefore those intersections would not be products of parabolas. Thus, $L_{{\mathcal A}'}=L_{\mathcal A}$. However, if ${\mathcal A}'$ is a parabolic axis of rotation of $F_i(x,y,z)=0$, the intersection of $F_i(x,y,z)=0$ with a generic plane $x=c$ must yield a product of parabolas whose common axis of symmetry is parallel to ${\mathcal A}'$. But since the $z$-axis is a parabolic axis of rotation of $F_i(x,y,z)=0$, the intersections of $F_i(x,y,z)=0$ with $x=c$ is a product of parabolas whose axis of symmetry is parallel to the $z$-axis. Therefore since ${\mathcal A}'$ passes through the origin, we conclude that ${\mathcal A}'={\mathcal A}$.
\end{proof}

We also need the following two lemmas. 

\begin{lemma} \label{no-two}
Let $S_1,S_2$ be defined by $y^2-2xz-w_1x^2=0$, $y^2-2xz-w_2x^2=0$, where $w_1\neq w_2$, $w_1$, $w_2$ real or complex. If $\Pi$ is a plane such that $S_1\cap \Pi$ and $S_2\cap \Pi$ are both parabolas, then $\Pi$ is perpendicular to the $x$-axis. 
\end{lemma}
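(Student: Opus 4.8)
\emph{Approach.} The plan is to reduce the geometric hypothesis to a single condition on the direction of $\Pi$, using the classical fact that the affine type of a plane section of a quadric (ellipse, parabola, or hyperbola) is determined entirely by the behaviour of the quadric at infinity relative to $\Pi$. Since $S_1$ and $S_2$ are cones with vertex at the origin, this behaviour depends only on the normal direction of $\Pi$ and not on where $\Pi$ sits, so throughout I would work with the normal $\mathbf{n}=(\alpha,\beta,\gamma)$ of $\Pi$. The one algebraic fact driving everything is that the two defining forms differ only in the coefficient of $x^2$:
\[
(y^2-2xz-w_1x^2)-(y^2-2xz-w_2x^2)=(w_2-w_1)\,x^2,
\]
a nonzero multiple of $x^2$ because $w_1\neq w_2$.

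\emph{Main computation.} I would write the forms as $Q_i(\bfx)=\bfx^T A_i\bfx$ with
\[
A_i=\begin{pmatrix}-w_i&0&-1\\0&1&0\\-1&0&0\end{pmatrix},
\]
and first record that $\det A_i=-1\neq 0$, so each conic at infinity $\{Q_i=0\}$ is nondegenerate and admits a dual conic. The section $S_i\cap\Pi$ is a parabola exactly when the line at infinity of $\Pi$ is tangent to the conic at infinity of $S_i$, i.e. when $\mathbf{n}$ lies on the dual conic, $\mathbf{n}^T A_i^{-1}\mathbf{n}=0$. Computing $A_i^{-1}$ and substituting gives the explicit tangency conditions
\[
\beta^2-2\alpha\gamma+w_i\,\gamma^2=0,\qquad i=1,2.
\]
Subtracting the two and invoking the displayed difference yields $(w_1-w_2)\gamma^2=0$, hence $\gamma=0$; feeding this back forces $\beta^2=0$, so $\beta=0$ as well. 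Thus $\mathbf{n}$ is a nonzero multiple of $(1,0,0)$, which says precisely that $\Pi$ is perpendicular to the $x$-axis.

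\emph{Alternative and main obstacle.} If I wanted to avoid the dual conic, I would instead restrict each $Q_i$ to the two-dimensional direction space $W$ of $\Pi$; the section is a parabola iff $Q_i|_W$ is a nonzero perfect square, i.e. has vanishing discriminant. Writing $Q_1|_W-Q_2|_W=(w_2-w_1)(x|_W)^2$ and working in a basis of $W$ adapted to the linear form $x|_W$, the vanishing of both discriminants forces $x|_W\equiv 0$, again giving that $\Pi$ is perpendicular to the $x$-axis; the only delicate point there is the case $x|_W\neq 0$, where one checks that the common null direction of the two restrictions must be the $z$-axis $(0,0,1)$, whose polar pairing with any direction of nonzero $x$-coordinate is nonzero, contradicting the perfect-square condition. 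In either route the main obstacle is conceptual rather than computational: I must pin down carefully that ``$S_i\cap\Pi$ is a parabola'' is genuinely equivalent to the tangency / vanishing-discriminant criterion I use, being alert both to degenerate sections (a pair of parallel lines also has vanishing discriminant) and to the fact that the lemma permits $w_1,w_2$ to be complex, so that the parabola criterion is best phrased at the level of the behaviour at infinity, where the subtraction argument is insensitive to reality. Once that equivalence is secured, the remaining computation is the short subtraction above.
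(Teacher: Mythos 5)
Your proof is correct, and it reaches the same two tangency conditions $\beta^2-2\alpha\gamma+w_i\gamma^2=0$ that the paper obtains, but by a different and more uniform route. The paper argues directly in coordinates: it splits the non-perpendicular planes into the two affine families $\alpha x+y+\beta z+\gamma=0$ and $\alpha x+z+\gamma=0$, substitutes into each cone's equation, homogenizes to find the points at infinity of the section, and imposes a double root of the resulting binary quadratic, arriving at $w=(2\alpha\beta-1)/\beta^2$ for each surface; subtracting forces $w_1=w_2$. You instead encode ``parabola'' once and for all as tangency of the line at infinity of $\Pi$ to the (nondegenerate, since $\det A_i=-1$) conic at infinity of the cone, i.e.\ $\mathbf{n}^TA_i^{-1}\mathbf{n}=0$, and your computed condition $\beta^2-2\alpha\gamma+w_i\gamma^2=0$ specializes exactly to the paper's condition when $\mathbf{n}=(\alpha,1,\beta)$. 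What your version buys is the elimination of the case split on the form of the plane and a single symmetric identity in the normal vector, from which $\gamma=0$ and then $\beta=0$ follow immediately; it also makes transparent why the argument is insensitive to $w_1,w_2$ being complex. Your flagged caveat --- that a vanishing discriminant also occurs for degenerate sections such as parallel line pairs --- is harmless here, since the lemma only requires the implication ``section is a parabola $\Rightarrow$ one (double) point at infinity $\Rightarrow$ tangency,'' which is exactly the direction you use.
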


\begin{proof} A plane $\Pi$ not perpendicular to the $x$-axis must be of the form either $\alpha x+y +\beta z+\gamma=0$, or $\alpha x+z+\gamma=0$. Let us consider the first case; the analysis for the second case is similar. Now let $S$ be the surface defined by $y^2-2xz-wx^2=0$. Since $S$ is a quadric, $S\cap \Pi$ must be a conic. Furthermore, if $S\cap \Pi$ is a parabola, then $S\cap\Pi$ has just one point at infinity. In order to find the points at infinity of $S\cap \Pi$, we introduce a homogenizing variable $u$. Then $\alpha x+\alpha y +\beta z+\gamma u=0$ defines the projective closure of $\Pi$. The points at infinity of $S\cap \Pi$ are the points $[x:y:z:0]$, where 
\[
y^2-2xz-wx^2=0,\mbox{ }\alpha x+y+\beta z=0.
\]
Substituting $y=-\alpha x-\beta z$ into the first equation, we deduce that
\begin{equation}\label{double}
\beta^2 z^2+2(\alpha \beta-1)xz+(\alpha^2-w)x^2=0.
\end{equation}
If $\beta=0$, we get two different points at infinity, namely $[0:0:1:0]$ and $[1:-\alpha:\frac{1}{2}(\alpha^2-w):0]$, in which case $S\cap \Pi$ cannot be a parabola. So $\beta\neq 0$. In this case, in order to get just one point at infinity Eq. \eqref{double} must have just one (double) root, which imposes the condition 
\begin{equation}\label{condit}
w=\frac{2\alpha \beta-1}{\beta^2}.
\end{equation}
Therefore, given $y^2-2xz-w_1x^2=0$, $y^2-2xz-w_2x^2=0$, any plane $\alpha x+y+\beta z+\gamma=0$ intersecting both of these surfaces in a parabola must satisfy Eq. \eqref{condit} for both $w_1,w_2$. But this implies $w_1=w_2$, which contradicts the hypothesis $w_1\neq w_2$. A similar line of reasoning shows that the planes $\alpha x+z+\gamma=0$ cannot intersect both $S_1$ and $S_2$ in parabolas. 
\end{proof}

\begin{lemma} \label{new-inf-axes}
Let $S$ be the surface defined by $y^2-2xz-wx^2=0$, where $w\in {\Bbb R}$. 
\begin{itemize}
\item [(1)] If $w=0$, then $S$ has infinitely many parabolic affine rotation axes, namely all the rulings of the cone $y^2-2xz=0$, intersecting at a point (the origin).
\item [(2)] If $w\neq 0$, then $S$ has two distinct parabolic affine rotation axes contained in the $xz$-plane, which are rulings of the cone $y^2-2xz-wx^2=0$ intersecting at a point (the origin). 
\end{itemize}
\end{lemma}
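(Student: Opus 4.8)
The plan is to use the covariance of parabolic affine rotations under the rigid motions that preserve the cone. Two facts set this up. First, $S=\{q_w=0\}$ with $q_w(x,y,z)=y^2-2xz-wx^2$ is a quadric cone with vertex at the origin, so any symmetry of $S$ fixes the vertex and is therefore linear; hence, as in the proof of Lemma~\ref{axis-z}(1), every parabolic axis of $S$ passes through the origin. Second, the $z$-axis is a parabolic axis of $S$ for every $w$, because the right-most matrix group $\bfQ_\alpha$ of Eq.~\eqref{types} fixes the $x$-coordinate and preserves $y^2-2xz$, so it preserves $q_w=(y^2-2xz)-wx^2$ and $\bfQ_\alpha(S)=S$. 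The tool I would use repeatedly is the covariance principle: if $R$ is a rigid motion fixing the origin with $R(S)=S$ and ${\mathcal A}$ is a parabolic axis, then $R({\mathcal A})$ is again a parabolic axis, since $R\,\bfQ_\alpha R^{-1}$ preserves $S$, is conjugate to $\bfQ_\alpha$ (hence a parabolic group in ${\bf SL}_3({\Bbb R})$), and fixes exactly the line $R({\mathcal A})$. Both parts then reduce to computing the orbit of the $z$-axis under the symmetry group of the cone, the contrast between them mirroring that between a continuous and a finite symmetry group.

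For part (1), when $w=0$ the orthogonal change of coordinates of Remark~\ref{pi} carries $S$ to the circular cone $\tilde x^2=\tilde y^2+\tilde z^2$, whose axis of revolution is the line $\{y=0,\,x-z=0\}$. The rotations about this axis are rigid motions preserving $S$, and since the $z$-axis is a ruling meeting the axis of revolution at an angle of $\frac{\pi}{4}$, its orbit under these rotations is the full one-parameter family of rulings of the cone. By the covariance principle each ruling is then a parabolic axis, so $S$ has infinitely many parabolic axes, namely all the rulings, all through the origin.

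For part (2), when $w\neq0$ the cone is a genuine elliptic (non-circular) cone, since by the analysis of Appendix~I its cross sections are circular exactly when $w=0$. Setting $y=0$ in $q_w$ gives $-x(2z+wx)=0$, so the plane $y=0$ meets the cone in precisely two rulings, $\ell_1=\{x=0,\,y=0\}$ (the $z$-axis) and $\ell_2=\{2z+wx=0,\,y=0\}$, distinct because $w\neq0$ and both through the origin. The line $\ell_1$ is a parabolic axis by the preliminary fact; for $\ell_2$ I would take $R$ to be the reflection in the plane spanned by the axis of revolution of the cone and the $y$-axis. This $R$ is a rigid motion fixing the origin that preserves $q_w$ (it reverses only the principal coordinate whose square occurs in $q_w$), and because $\ell_1,\ell_2$ are the two rulings symmetric about the cone's axis inside the plane $y=0$, it interchanges $\ell_1$ and $\ell_2$. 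The covariance principle then shows $\ell_2$ is a parabolic axis as well, giving the two distinct axes claimed, both rulings in the $xz$-plane meeting at the origin.

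The hard part will be the argument for $\ell_2$ in part (2): one must exhibit a concrete rigid motion of the elliptic cone that swaps the two rulings of the plane $y=0$ and verify that it preserves $q_w$, which is where the principal axes (the eigenvectors of the matrix of $q_w$) enter, and it is precisely the finite—rather than continuous—nature of this symmetry group that accounts for the passage from infinitely many axes when $w=0$ to the two displayed axes when $w\neq0$. Should one also wish to see that $\ell_1,\ell_2$ are the only parabolic axes lying in the plane $y=0$, I would note that any parabolic axis must be a ruling of the cone: the parallel parabolas share a point at infinity equal to their common axis direction, and for a cone that point lies on the conic at infinity $\{q_w=0\}$, so inside $y=0$ the only candidates are $\ell_1$ and $\ell_2$.
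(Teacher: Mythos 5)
Your existence argument follows essentially the same route as the paper's: both rest on the observation that if an orthogonal map $R$ fixing the origin preserves $S$, then $S$ is a parabolic affine rotation surface about the image of any parabolic axis. For $w=0$ the paper likewise uses the rotations about the axis of revolution $\{y=0,\,x=z\}$ to sweep the $z$-axis over all rulings, and for $w\neq 0$ it uses the half-turn about the principal axis ${\mathcal L}_{\mathcal R}$ of the elliptic cone in the plane $y=0$ where you use the reflection negating the third principal coordinate; these are interchangeable. Your "covariance principle" is a clean way to make explicit what the paper leaves implicit, and your conic-at-infinity observation (a parabolic axis must be a ruling, since the common axis direction of the parallel parabolas is a point of $S$ at infinity) is a nice alternative to part of the paper's machinery.

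The genuine gap is in the exactness claim of part (2), which you treat as optional but which is an essential part of the lemma: Corollary \ref{cor-inf-axes} and Theorem \ref{th-parab} rely on the dichotomy ``exactly two versus infinitely many,'' and the paper's proof explicitly shows that the $z$-axis and ${\mathcal L}=\{y=0,\,2z+wx=0\}$ are the \emph{only} parabolic axes. Your sketch only rules out additional axes \emph{lying in the plane} $y=0$. Knowing that every parabolic axis is a ruling through the origin does not finish the job: for $w\neq 0$ the cone has infinitely many rulings, only two of which lie in $y=0$, and being a ruling is necessary but visibly not sufficient (otherwise there would again be infinitely many axes). You must exclude rulings outside $y=0$. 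The paper does this via Remark \ref{how-to-find} and Appendix I: for the elliptic cone the eigenvector $(0,1,0)$ of the simple eigenvalue $\lambda=1$ is normal to both ${\mathcal A}$ and $L_{\mathcal A}$, which forces any parabolic axis through the origin into the plane $y=0$; it then observes that a line in $y=0$ through the origin that is not a ruling cannot be an axis because the planes parallel to it meet the cone in ellipses or hyperbolas. Some argument of this kind (confining the axis to $y=0$ first) needs to be added to your proof; the same remark applies, more mildly, to the word ``namely'' in part (1), where your ruling criterion does suffice once stated for all axes rather than only those in $y=0$.
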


\begin{proof} Since $F(x,y,z)=y^2-2xz-wx^2=G(y^2-2xz,x)$, it is clear that $S$ is a parabolic affine rotation surface about the $z$-axis. Observe that (see Appendix I) $S$ is a cone, an elliptic cone if $w\neq 0$, and a circular cone if $w=0$, whose vertex lies at the origin, and that the $z$-axis $\equiv \{x=0,y=0\}$ is a ruling of $S$. Thus if $w=0$, $S$ is a surface of revolution. Therefore, $S$ is invariant under all the Euclidean rotations about its axis of revolution, which we denote by $L_{\mathcal R}$. Thus, given any ruling ${\mathcal L}\subset S$, there exists an orthogonal transformation (namely, a rotation about $L_{\mathcal R}$) mapping ${\mathcal L}$ onto the $z$-axis, and leaving $S$ invariant. So $S$ is also a parabolic affine rotation surface about any ruling ${\mathcal L}$ of $S$, and (1) follows. 

Now let us see (2). Since $F(x,y,z)=y^2-2xz-wx^2$ satisfies $F(x,-y,z)=F(x,y,z)$, $S$ is symmetric with respect to the plane $y=0$. If $w\neq 0$, the intersection of $S$ with $y=0$ consists of two distinct lines, the $z$-axis and the line ${\mathcal L}\equiv \{y=0,2z+wx=0\}$. Since the surface $S$ is an elliptic cone, there is a line ${\mathcal L}_{\mathcal R}$ such that any plane perpendicular to ${\mathcal L}_{\mathcal R}$ (and not passing through the vertex of the cone) intersects the surface $S$ in an ellipse whose center lies on ${\mathcal L}_{\mathcal R}$. Moreover, these ellipses have parallel, coplanar major and minor axes. Furthermore, since the cone is symmetric with respect to the plane $y=0$, this plane must pass through either all the major axes or all the minor axes of these ellipses. Therefore the line ${\mathcal L}_{\mathcal R}$ must lie in the plane $y=0$. Again by symmetry, the line ${\mathcal L}_{\mathcal R}$ is the angle bisector in the plane $y=0$ of the $z$-axis and the line ${\mathcal L}$, two rulings that pass through the end points of the major or minor axes of the cross sectional ellipses. Hence we can rotate the line ${\mathcal L}$ about the line ${\mathcal L}_{\mathcal R}$ by 180 degrees into the $z$-axis leaving the surface $S$ unchanged. Thus, we deduce that ${\mathcal L}$ is also a parabolic affine rotation axis of $S$.

Finally, let us see now that the $z$-axis and ${\mathcal L}$ are the only parabolic affine rotation axes of $S$. Indeed, by Remark \ref{how-to-find} and Appendix I, any other parabolic axis ${\mathcal L}'$ must lie in a plane normal to the $y$-axis, which is the direction of the eigenvector associated with the eigenvalue $\lambda=1$ of the matrix defining $S$. Since the $z$-axis and ${\mathcal L}$ intersect at the origin, ${\mathcal L}'$ must contain the origin too, so ${\mathcal L}'$ is a line through the origin lying in the plane $y=0$. However, since $S$ is an elliptic cone, if ${\mathcal L}'$ is different from ${\mathcal L}$ and from the $z$-axis, then ${\mathcal L}'$ does not lie on the cone
so the intersection of $S$ with any plane $\Pi'$ parallel to ${\mathcal L}'$ is either an ellipse or a hyperbola. Thus, ${\mathcal L}'$ cannot be a parabolic affine rotation axis. 
\end{proof} 

Lemma \ref{new-inf-axes} leads to the following corollary.

\begin{corollary}\label{cor-inf-axes}
Let $S$ be defined by $y^2-2xz-wx^2=\delta$, where $w,\delta\in {\Bbb R}$. If $w=0$, then $S$ has infinitely many parabolic axes of rotation. If $w\neq 0$, then $S$ has two distinct parabolic axes of rotation.
\end{corollary}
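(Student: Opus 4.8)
The plan is to reduce the general statement to the case $\delta=0$, which is precisely Lemma \ref{new-inf-axes}, by observing that the orthogonal symmetries produced in that lemma preserve not merely the cone $y^2-2xz-wx^2=0$ but the whole quadratic form, hence every one of its level sets. Write $q(x,y,z)=y^2-2xz-wx^2$, so that $S=\{q=\delta\}$ and the cone of Lemma \ref{new-inf-axes} is $S_0=\{q=0\}$. When $\delta=0$ the claim is Lemma \ref{new-inf-axes}, so I would assume $\delta\neq 0$. First note that $S$ is a parabolic affine rotation surface about the $z$-axis, since $q-\delta$ is a polynomial in $p=y^2-2xz$ and in $x$, so $S$ is invariant under the right-most group of Eq. \eqref{types}.

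For the \emph{existence} of the extra axes I would revisit the proof of Lemma \ref{new-inf-axes}: each additional parabolic axis of $S_0$ is obtained by applying to the $z$-axis an orthogonal transformation $g$ leaving $S_0$ invariant, namely a rotation about the axis of revolution $L_{\mathcal R}$ when $w=0$, and the $180^\circ$ rotation about the cone axis ${\mathcal L}_{\mathcal R}$ when $w\neq 0$. The key step is to check that each such $g$ preserves $q$ itself, not only the zero cone $q=0$. This holds because $g$ is diagonal in the principal-axis frame of $q$: for $w=0$ the form $q$ has eigenvalues $1,1,-1$ (a circular cone) with $L_{\mathcal R}$ the axis of the distinct eigenvalue, so rotations about $L_{\mathcal R}$ fix that eigenline and rotate within the equal-eigenvalue plane, leaving $q$ fixed; for $w\neq 0$, the axis ${\mathcal L}_{\mathcal R}$ is a principal axis of $q$, and a $180^\circ$ rotation about a principal axis fixes any form that is diagonal in that frame. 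In both cases $q\circ g=q$, so $g$ maps $S=\{q=\delta\}$ to itself. Since $g\in SO(3)\subset SL_3(\mathbb{R})$, conjugating the standard parabolic group $\bfQ_{\alpha}$ by $g$ produces a parabolic affine rotation group about the line $g(z\text{-axis})$ that again preserves $S$. Hence every axis exhibited in Lemma \ref{new-inf-axes} remains a parabolic axis of $S$: infinitely many rulings of $S_0$ when $w=0$, and the two lines (the $z$-axis and ${\mathcal L}$) when $w\neq 0$.

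For \emph{uniqueness}, the extra ingredient available when $\delta\neq 0$ is that, unlike the cone, $S$ is a nondevelopable quadric with Gaussian curvature not identically zero, so Lemma \ref{axis-z} now applies. Any parabolic axis ${\mathcal A}'$ of $S$ other than the $z$-axis must pass through the origin by Lemma \ref{axis-z}(1), while by Lemma \ref{forms-1} its direction must be parallel to a parabolic axis through the origin of the leading form $F_2=q$, that is, of $S_0$. By Lemma \ref{new-inf-axes} these directions are exactly the rulings of $S_0$ when $w=0$ and exactly the directions of the $z$-axis and ${\mathcal L}$ when $w\neq 0$; intersecting with the through-the-origin condition forces ${\mathcal A}'$ to coincide with one of the axes already exhibited. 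This yields infinitely many axes for $w=0$ and exactly two for $w\neq 0$. The main obstacle is the lifting step: proving that the symmetries of Lemma \ref{new-inf-axes} preserve $q$ \emph{exactly} and not just its zero cone, so that invariance transfers from $S_0$ to the level surface $\{q=\delta\}$; once this is phrased in the principal-axis frame of $q$, everything else is bookkeeping on top of the earlier lemmas.
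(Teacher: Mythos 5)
Your proof is correct and follows the route the paper intends: the paper states this corollary as an immediate consequence of Lemma \ref{new-inf-axes} without supplying any argument, and what you have written is exactly the missing deduction. In particular, your two explicit steps --- that the orthogonal symmetries exhibited in the proof of Lemma \ref{new-inf-axes} preserve the quadratic form $q=y^2-2xz-wx^2$ itself (hence every level set, so the axes of the cone persist for $\delta\neq 0$), and that Lemma \ref{forms-1} together with Lemma \ref{axis-z} caps the possible axes of $\{q=\delta\}$ by those of the leading form $\{q=0\}$ --- are precisely what is needed to pass from the cone to its level surfaces.
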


Observe that the surfaces in Corollary \ref{cor-inf-axes} are level surfaces of $F(x,y,z)=y^2-2xz-wz^2$. These surfaces are hyperboloids when $\delta\neq 0$, and elliptical cones (see Appendix I) when $\delta=0$. In particular, the surfaces $y^2-2xz=\delta$ play the role of parabolic affine spheres. When $\delta=0$ these affine spheres are right circular cones, whose rulings are the parabolic axes of rotation. Now, finally, we can prove the following general theorem. 

\begin{theorem}\label{th-parab} Let $S$ be an irreducible, algebraic, real, parabolic affine rotation surface. Then $S$ has either one, or two, or infinitely many parabolic axes of rotation. Furthermore, if $S$ has more than one parabolic axis of rotation, then there exists a similarity mapping $S$ onto a level surface of $F(x,y,z)=y^2-2xz+wx^2$, with $w\in {\Bbb R}$, where $w\neq 0$ if $S$ has two parabolic axes of rotation, and $w=0$ if $S$ has infinitely many parabolic axes of rotation.
\end{theorem}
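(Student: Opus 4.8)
The plan is to normalize the surface by a similarity, reduce the question to the homogeneous forms of $F$, and then show that the existence of a \emph{second} parabolic axis forces all the quadric factors of these forms to collapse to a single quadric $y^2-2xz-wx^2$.

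First I would apply a similarity so that the $z$-axis becomes a parabolic axis of $S$ and $S$ is invariant under the right-most matrix group in Eq. \eqref{types}; by the discussion in Section \ref{prelim-new} this means that $F(x,y,z)=\tilde{F}(y^2-2xz,x)$. If the $z$-axis is the only parabolic axis, then $S$ has exactly one parabolic axis and there is nothing further to prove, so assume that $S$ has a second parabolic axis $\mathcal{A}'$ different from the $z$-axis. By Lemma \ref{axis-z}, $\mathcal{A}'$ passes through the origin and none of the homogeneous forms $F_i(x,y,z)$ in Eq. \eqref{decompos} has a linear factor. Next I would analyze these forms: by Corollary \ref{cor-forms} each non-constant $F_i$ is itself a parabolic affine rotation surface about the $z$-axis, and since $F_i$ has no linear factor, Theorem \ref{th-general} shows that $F_i$ equals, up to a constant, a product of quadrics of the form $y^2-2xz-wx^2$ (with $w$ real or complex, possibly repeated, and $w=0$ allowed). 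By Corollary \ref{cor-forms} applied to $\mathcal{A}'$, each $F_i$ also has $\mathcal{A}'$ as a parabolic axis, so a generic plane $\Pi$ normal to $L_{\mathcal{A}'}$ meets $F_i=0$ in a product of parabolas; consequently $\Pi$ meets each distinct quadric factor $\{y^2-2xz-wx^2=0\}$ in a parabola.

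The heart of the argument, and the step I expect to be the main obstacle, is to prove that a single value of $w$ occurs among all the quadric factors of all the forms $F_i$. Suppose two distinct values $w_1\neq w_2$ appeared. Then every generic plane $\Pi$ normal to $L_{\mathcal{A}'}$ would cut both $y^2-2xz-w_1x^2=0$ and $y^2-2xz-w_2x^2=0$ in parabolas, so by Lemma \ref{no-two} every such $\Pi$ would be perpendicular to the $x$-axis. This forces $L_{\mathcal{A}'}$ to be parallel to the $x$-axis, hence $L_{\mathcal{A}'}=L_{\mathcal{A}}$; but the parabolic sections of $\{y^2-2xz-wx^2=0\}$ by planes $x=c$ have axes of symmetry parallel to the $z$-axis, so $\mathcal{A}'$ would be parallel to the $z$-axis and, passing through the origin, would coincide with the $z$-axis, contradicting $\mathcal{A}'\neq$ $z$-axis. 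Hence all quadric factors share one value $w$, and since $F$ has real coefficients this $w$ is real. Writing $t=y^2-2xz-wx^2$, every form of odd degree must vanish and each even $F_i$ equals $c_i\,t^{i/2}$, so that $F$ is a univariate polynomial $G$ in $t$.

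Finally I would invoke irreducibility. Since $G(t)=c\prod_j(t-\delta_j)^{e_j}$ exhibits $S$ as the union of the quadrics $\{t=\delta_j\}$ and $S$ is irreducible, only one distinct root $\delta$ can occur, so $F=c\,(y^2-2xz-wx^2-\delta)$ and $S$ is, after the normalizing similarity, a level surface of $y^2-2xz-wx^2$; renaming the real parameter $-w$ as $w$ this is a level surface of $y^2-2xz+wx^2$, and the normalizing similarity is the required map. The count of axes then follows from Corollary \ref{cor-inf-axes}: such a level surface has exactly two parabolic axes when $w\neq 0$ and infinitely many when $w=0$. Together with the one-axis case, this shows that $S$ has one, two, or infinitely many parabolic axes of rotation, which completes the proof.
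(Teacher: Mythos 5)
Your proof is correct and follows essentially the same route as the paper's: Lemma \ref{axis-z} to place the second axis through the origin and rule out linear factors, the quadric factorization coming from Theorem \ref{th-general} and Lemma \ref{forms-2}/Corollary \ref{cor-forms}, Lemma \ref{no-two} to force a single value of $w$, irreducibility to reduce to a single level surface, and Corollary \ref{cor-inf-axes} for the axis count. Your derivation of the contradiction from $w_1\neq w_2$ is a slightly more streamlined version of the paper's case analysis (A)/(B), but the underlying argument --- $L_{{\mathcal A}'}$ must be the $x$-direction, hence ${\mathcal A}'$ is forced to coincide with the $z$-axis --- is the same.
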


\begin{proof} Suppose that $S$ has two different parabolic axes of rotation ${\mathcal A}_1,{\mathcal A}_2$, and assume without loss of generality that ${\mathcal A}_1$ is the $z$-axis.   By statement (1) of Lemma \ref{axis-z}, ${\mathcal A}_2$ intersects ${\mathcal A}_1$ at the origin. Furthermore, by Lemma \ref{forms-2} each homogeneous form $F_i(x,y,z)$ of $F(x,y,z)$ must be a parabolic affine rotation surface about the axes ${\mathcal A}_1,{\mathcal A}_2$. By statement (2) of Lemma \ref{axis-z}, the $F_i(x,y,z)$ have no linear factors; therefore, each $F_i(x,y,z)$ factors into products of polynomials $y^2-2xz-w_j x^2$, where $w_j$ is possibly complex. Since ${\mathcal A}_2$ is a parabolic axis of rotation, the family of planes orthogonal to $L_{{\mathcal A}_2}$ must intersect each surface $y^2-2xz-w_j x^2=0$ in a parabola. However, from Lemma \ref{no-two} and Lemma \ref{new-inf-axes}, this is only possible if all the $w_j$ are the same. Indeed, we have two possibilities: (A) one of the $w_j$ is 0; (B) both $w_j$ are nonzero. In case (A), by Lemma \ref{new-inf-axes} the factor $y^2-2xz$ represents a parabolic affine rotation surface with infinitely many axes, namely all the rulings of the cone $y^2-2xz=0$. Also by Lemma \ref{new-inf-axes} the factor $y^2-2xz-w_jx^2$ with $w_j\neq 0$ represents a parabolic affine rotation surface with two different axes, ${\mathcal A}_1$ and ${\mathcal A}_2$, where ${\mathcal A}_2\neq {\mathcal A}_1$ lies on the $xz$-plane. Additionally, ${\mathcal A}_2$ is also a parabolic axis of $y^2-2xz=0$ because ${\mathcal A}_2$ is a ruling of $y^2-2xz=0$. Furthermore, since $w_j\neq 0$ it follows from Lemma \ref{no-two} that $L_{{\mathcal A}_2}$ must be the $x$-axis. Since ${\mathcal A}_2$ contains the origin, ${\mathcal A}_2$ lies in the $xz$-plane and $L_{{\mathcal A}_2}$ is perpendicular to ${\mathcal A}_2$, we deduce that ${\mathcal A}_2={\mathcal A}_1$, which cannot be. A similar argument shows that case (B) cannot happen either.

Therefore, for each $i$, the homogeneous form $F_i$ is either constant $(F_0)$, or has just one irreducible factor, which we represent by $y^2-2xz-wx^2$. Then $F(x,y,z)$ is a polynomial in $y^2-2xz-wx^2$, so $S$ is the union of surfaces of the form $y^2-2xz-wx^2=\delta$, where $w,\delta$ are possibly complex. Since $F(x,y,z)=0$ is irreducible by hypothesis, $S$ has the form $y^2-2xz-wx^2=\delta$. Since $S$ is real, $w,\delta$ are real and the result follows from Corollary \ref{cor-inf-axes}. 
\end{proof}

Therefore, we conclude that there exist parabolic affine rotation surfaces that are affine spheres. From Theorem \ref{isaffinesphere}, examples are the surfaces, characterized in Theorem \ref{th-parab}, with either two or infinitely many parabolic axes, and also the surfaces that are parabolic affine rotation surfaces about an axis and, simultaneously, affine rotation surfaces of another type about another axis.  Figure \ref{fig-infinite} shows the surface $y^2-2xz=0$, a surface with infinitely many parabolic axes of rotation. Two of these axes are the $z$-axis (in blue) and the $x$-axis (in red). Some parallel curves with respect to each axis are also shown. 

\begin{figure}
\begin{center}
$$\begin{array}{c}
\includegraphics[scale=1]{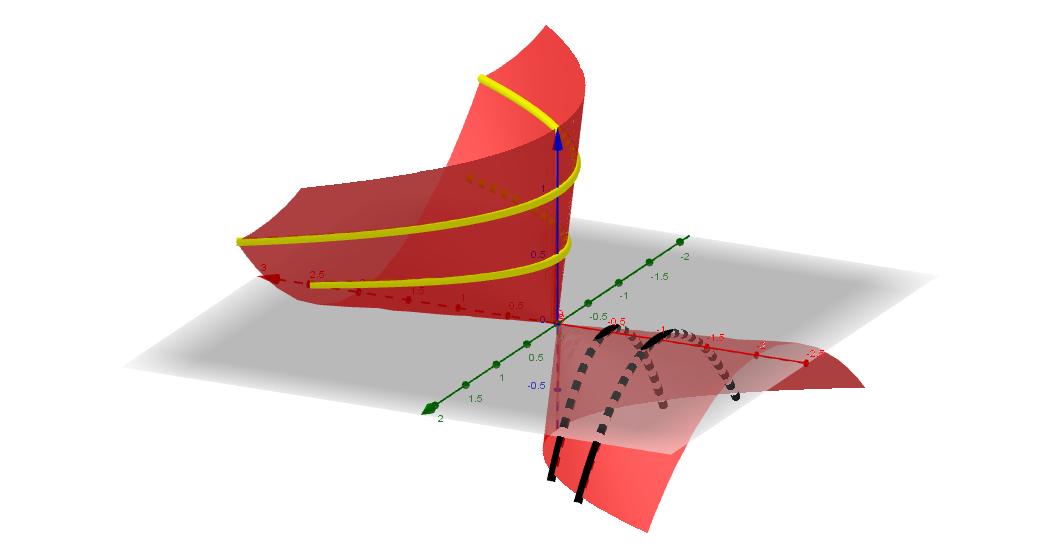}  
\end{array}$$
\end{center}
\caption{The surface $y^2-2xz=0$, a parabolic affine rotation surface with infinitely many affine axes of rotation. Some cross sections parallel to two of these axes (the $z$-axis and the $x$-axis) are shown in black and yellow.}\label{fig-infinite}
\end{figure}

\section{Algebraic affine rotation surfaces.}\label{sec-algebraic}

Let $S$ be an algebraic surface, with the hypotheses listed at the end of Section 2. Our goal here is to provide a method to detect whether or not $S$ is an affine rotation surface by examining the equations of the surface. In order to do so, we begin by recalling \emph{Pl\"ucker coordinates} \cite{Pottmann}. 

Pl\"ucker coordinates provide an alternative way to represent straight lines. A line $L\subset {\Bbb R}^3$ is completely determined when we know a point $P\in L$ and a vector $\bfw$ parallel to $L$. Therefore we often write $L=(P,\bfw)$. Now let $\bar{\bfw}=\bfP\times \bfw$, where $\bfP$ here denotes the vector connecting the point $P$ with the origin of the coordinate system. Then the \emph{Pl\"ucker coordinates} of $L$ are the coordinates of $(\bar{\bfw},\bfw)\in {\Bbb R}^6$. Notice that by construction $\bar{\bfw}\cdot \bfw=0$. 

Pl\"ucker coordinates are unique up to multiplication by a constant. Moreover $\bar{\bfw}$ is independent of the choice of the point $P\in L$, since if $Q\in L$, then $(\bfQ-\bfP)\times \bfw=0$. Furthermore, given the Pl\"ucker coordinates $(\bar{\bfw},\bfw)$ of $L$, we can recover a point $P$ on $L$ from the relationship
\begin{equation}\label{pluck}
\bfP\times \bfw=\bar{\bfw},
\end{equation}
by writing $\bfP=(x,y,z)$ and solving the linear system \eqref{pluck} for $x,y,z$. 

Let $(\beta,\alpha)$ be the Pl\"ucker coordinates for a line in ${\Bbb R}^3$, and consider all the lines $(\bar{\bfw},\bfw)$, written in Pl\"ucker coordinates, such that 
\begin{equation} \label{lin}
\alpha\cdot\bar{\bfw}+\beta\cdot \bfw=0.
\end{equation}
This equation (see \cite{Pottmann}, \cite{Vrseck}) expresses the condition that the line $(\beta,\alpha)$ intersects the line $(\bar{\bfw},\bfw)$. Equation \eqref{lin} corresponds to a hyperplane of ${\Bbb R}^6$, which in \cite{Pottmann} is called a {\it linear complex}. 

Now we will distinguish two cases: when $S$ is given implicitly by an algebraic equation, or when $S$ is given explicitly by a rational parametrization. In the rest of this section, $F(x,y,z)=0$ denotes the implicit equation of $S$. For simplicity, we assume that $F$ is irreducible. Furthermore, $\bfx(t,s)$ represents a rational parametrization of $S$.   

\subsection{The implicit case.} \label{subsec-implicit}

Let $\xi(x,y,z)$ be the affine normal vector at a generic, non-singular point $(x,y,z)$ of the surface $S$ implicitly defined by $F(x,y,z)=0$. An explicit expression for the affine normal vector $\xi(x,y,z)$ can be found in Appendix A of \cite{Andrade}. One can check that 
\begin{equation}\label{xi-gamma}
\xi(x,y,z)=a(x,y,z) (\gamma_1(x,y,z),\gamma_2(x,y,z),\gamma_3(x,y,z)),
\end{equation}
where $a(x,y,z)$ involves a radical term (see the last line on page 75 of \cite{Andrade}), and for $i=1,2,3$, $\gamma_i(x,y,z)$ is a rational function of $x,y,z$; the expressions for the $\gamma_i$ are very long and can be found on pages 75-77 of \cite{Andrade}. Calling $\gamma=(\gamma_1,\gamma_2,\gamma_3)$ and $\overline{\gamma}=\bfx\times \gamma$, with $\bfx=[x,y,z]^T$, we can take $(\overline{\gamma},\gamma)$ as Pl\"ucker coordinates for the affine normal line to $S$ at a generic non-singular point $(x,y,z)$. Notice that the components of $(\overline{\gamma},\gamma)$ are rational functions of $x,y,z$. 

The following proposition establishes that the invariance of the surface $S$ is equivalent to the invariance of the polynomial $F(x,y,z)$ implicitly defining $S$. 

\begin{proposition}\label{const-one} Consider a surface $S$ implicitly defined by $F(x,y,z)=0$, with $F$ irreducible. Then $S$ is invariant under a group of affine rotations about the line ${\mathcal A}$, denoted as ${\mathcal G}_{\alpha,{\mathcal A}}(\bfx)$, iff 
\begin{equation}\label{eq-fundam}
F({\mathcal G}_{\alpha,{\mathcal A}}(\bfx))=F(\bfx).
\end{equation}
\end{proposition}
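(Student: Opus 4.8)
The plan is to prove the two implications of the equivalence separately, reserving the bulk of the effort for the forward direction $(\Rightarrow)$, where the irreducibility and realness of $S$ are essential. For brevity write $\mathcal{G}_\alpha = \mathcal{G}_{\alpha,\mathcal{A}}$, and recall that each $\mathcal{G}_\alpha$ is an invertible linear map with $\det \mathcal{G}_\alpha = 1$, that $\mathcal{G}_0 = I$, and that $\mathcal{G}_\alpha \mathcal{G}_\beta = \mathcal{G}_{\alpha+\beta}$, so $\{\mathcal{G}_\alpha\}$ is a one-parameter group with $\mathcal{G}_\alpha^{-1} = \mathcal{G}_{-\alpha}$. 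The reverse implication $(\Leftarrow)$ is immediate: if $F(\mathcal{G}_\alpha(\bfx)) = F(\bfx)$ holds identically for all $\alpha$, then $\bfx \in S$ forces $F(\mathcal{G}_\alpha(\bfx)) = F(\bfx) = 0$, so $\mathcal{G}_\alpha(S) \subseteq S$; applying the same reasoning to $\mathcal{G}_{-\alpha}$ gives the reverse inclusion, whence $\mathcal{G}_\alpha(S) = S$ and $S$ is invariant.

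For the forward implication, I would first pass from invariance of the point set to a divisibility relation between polynomials. Fix $\alpha$ and set $G_\alpha(\bfx) := F(\mathcal{G}_\alpha(\bfx))$; since $\mathcal{G}_\alpha$ is an invertible linear substitution, $G_\alpha$ is a polynomial of the same degree as $F$ and is again irreducible. Invariance of $S$ means $\mathcal{G}_\alpha$ maps the real points of $S$ into themselves, so $G_\alpha$ vanishes at every real point of $S$. Because $F$ is irreducible and $S$ is real of dimension two (hypotheses (ii) and (iii)), the real points of $S$ are Zariski dense in the complex variety $\{F = 0\} \subset \mathbb{C}^3$; hence $G_\alpha$ vanishes on that whole complex variety. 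By the Nullstellensatz, together with the fact that $(F)$ is a prime, hence radical, ideal, this forces $F \mid G_\alpha$. As $F$ and $G_\alpha$ share the same degree and $F$ is irreducible, I conclude $G_\alpha = c_\alpha F$ for some nonzero scalar $c_\alpha$ depending only on $\alpha$, i.e. $F(\mathcal{G}_\alpha(\bfx)) = c_\alpha F(\bfx)$.

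It then remains to show $c_\alpha \equiv 1$, which I regard as the crux. Evaluating at a fixed $\bfx_0$ with $F(\bfx_0)\neq 0$ shows $\alpha \mapsto c_\alpha$ is continuous; the group law gives $c_{\alpha+\beta}=c_\alpha c_\beta$ with $c_0=1$, so $c_\alpha = e^{k\alpha}$ for some real $k$, and it suffices to prove $k=0$. Here I would argue by cases according to the type of $\mathcal{G}_\alpha$ in Eq. \eqref{types}. In the elliptic case $\mathcal{G}_\alpha$ is $2\pi$-periodic in $\alpha$, so $c_\alpha$ is periodic and $k=0$. In the parabolic case the entries of $\mathcal{G}_\alpha(\bfx)$ are polynomials in $\alpha$, so for fixed $\bfx$ the function $F(\mathcal{G}_\alpha(\bfx))$ is a polynomial in $\alpha$; equality with $e^{k\alpha}F(\bfx)$ forces $k=0$. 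In the hyperbolic case I would diagonalize in the eigencoordinates $u=x+y,\ v=x-y,\ w=z$, where $\mathcal{G}_\alpha$ sends $(u,v,w)$ to $(e^{\alpha}u,\,e^{-\alpha}v,\,w)$; the relation $F(\mathcal{G}_\alpha(\bfx))=e^{k\alpha}F(\bfx)$ then forces every monomial $u^iv^jw^l$ of $F$ to satisfy $i-j=k$, so that $u^{k}$ (if $k>0$) or $v^{-k}$ (if $k<0$) divides $F$, contradicting the irreducibility of $F$ unless $k=0$.

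The main obstacle is precisely this last step. A tempting shortcut exploits that the axis $\mathcal{A}$ is pointwise fixed by every $\mathcal{G}_\alpha$: evaluating $F(\mathcal{G}_\alpha(\bfx))=c_\alpha F(\bfx)$ at a point of $\mathcal{A}$ off $S$ would give $c_\alpha=1$ at once. But this breaks down exactly when $\mathcal{A}\subset S$ (as for the cone $y^2-2xz=0$, whose axis is a ruling), so the case analysis above, which leans on irreducibility rather than on a convenient fixed point, is the robust route. The earlier step—upgrading coincidence of real zero sets to proportionality of polynomials—also demands genuine care, since it is there that the hypotheses of irreducibility and of $S$ being a true two-dimensional real surface are indispensable, via Zariski density of the real points; dropping either would allow spurious factors or a lower-dimensional real locus to defeat the divisibility argument.
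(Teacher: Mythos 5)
Your proof is correct, but it takes a genuinely different route from the paper's. The paper disposes of $(\Rightarrow)$ by citing its structural results: after moving $\mathcal{A}$ to the $z$-axis, Corollary \ref{cor-forms} and the proof of Theorem \ref{th-general} show that each homogeneous form of $F$ is a polynomial in the invariant quantities (e.g.\ $y^2-2xz$ and $x$ in the parabolic case), and since those building blocks are literally preserved by $\mathcal{G}_{\alpha,\mathcal{A}}$, so is $F$; the elliptic and hyperbolic cases are declared analogous. You instead argue from first principles: set-invariance plus irreducibility and realness of $S$ yield $F\circ\mathcal{G}_\alpha=c_\alpha F$ via Zariski density of the real points and the Nullstellensatz, and the multiplicative cocycle $c_\alpha=e^{k\alpha}$ is then killed by a per-type analysis (periodicity, polynomiality in $\alpha$, eigencoordinates). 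Your version is self-contained, and it makes visible exactly where each hypothesis is used --- in particular your observation that the fixed-point shortcut fails when $\mathcal{A}\subset S$ is a real issue the paper's proof never has to confront. The paper's version is shorter but leans on the identity $F=\tilde F(p(\bfx),x)$ asserted in Section \ref{prelim-new}, whose justification is essentially of the same nature as the proposition itself. One small repair to your hyperbolic case: when $k=\pm 1$, divisibility of $F$ by $u=x+y$ (or $v=x-y$) is \emph{not} by itself a contradiction with irreducibility, since $F$ could equal $x+y$ up to a constant --- and indeed for the plane $x+y=0$ one has $F\circ\mathcal{G}_\alpha=e^{\alpha}F$, so the conclusion genuinely fails there. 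You must invoke the standing hypothesis (iv) that $S$ is not a plane (nonvanishing Gaussian curvature) to exclude $k=\pm 1$, not irreducibility alone.
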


\begin{proof} ``$\Leftarrow$": If Eq. \eqref{eq-fundam} holds, $F(\bfx)$ and $F({\mathcal G}_{\alpha,{\mathcal A}}(\bfx))$ both define the surface $S$. Therefore, $S$ is invariant under the group ${\mathcal G}_{\alpha,{\mathcal A}}(\bfx)$. ``$\Rightarrow$" We outline the proof for parabolic affine rotation surfaces; the proof is similar for the elliptic and the hyperbolic types. Applying if necessary an orthogonal transformation, we can assume that ${\mathcal A}$ is the $z$-axis. By Corollary \ref{cor-forms} and the proof of Theorem \ref{th-general}, every homogeneous form $F_i(x,y,z)$ of $F(x,y,z)$ is the sum of terms of the form $(y^2-2xz)^r x^{i-2r}$. But both $y^2-2xz$ and $x$ are invariant under ${\mathcal G}_{\alpha,{\mathcal A}}(\bfx)$. Therefore, $F_i\circ  {\mathcal G}_{\alpha,{\mathcal A}}=F_i$ for all $i$, so $F\circ {\mathcal G}_{\alpha,{\mathcal A}}=F$. 
\end{proof} 

The following observation, analogous to Theorem 2.4 in \cite{Vrseck}, is crucial. Here and in subsequent results, we denote the surface defined by $F(x,y,z)=k$, where $k\in {\Bbb R}$, by $S_k$.

\begin{theorem} \label{crucial}
The surface $F(x,y,z)=0$ is an affine rotation surface about an axis ${\mathcal A}$ if and only if for all $k\in {\Bbb R}$, the surface $S_k$ is also an affine rotation surface about the same axis ${\mathcal A}$.  
\end{theorem}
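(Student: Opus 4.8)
The plan is to reduce the whole statement to a single polynomial identity and then propagate it across every level set. The reverse implication ($\Leftarrow$) carries no content: if every $S_k$ is an affine rotation surface about ${\mathcal A}$, then taking $k=0$ shows that $S_0=\{F(\bfx)=0\}$ is an affine rotation surface about ${\mathcal A}$, which is precisely the left-hand assertion. So all the work is in the forward direction.

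For the forward implication ($\Rightarrow$), suppose $F(\bfx)=0$ is an affine rotation surface about ${\mathcal A}$, i.e. the set $S_0$ is invariant under the group ${\mathcal G}_{\alpha,{\mathcal A}}$. Since $F$ is irreducible by our standing assumption, I would apply the nontrivial implication of Proposition \ref{const-one} to upgrade this set-theoretic invariance into the polynomial identity
\[
F({\mathcal G}_{\alpha,{\mathcal A}}(\bfx))=F(\bfx),
\]
holding identically in $\bfx$ for every value of the parameter $\alpha$. The crucial point is that this identity does not refer to the particular level $0$: subtracting an arbitrary constant $k\in\RR$ from both sides gives $F({\mathcal G}_{\alpha,{\mathcal A}}(\bfx))-k=F(\bfx)-k$, so the polynomial $F-k$ is likewise invariant under ${\mathcal G}_{\alpha,{\mathcal A}}$. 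Consequently, if $p\in S_k$, i.e. $F(p)=k$, then $F({\mathcal G}_{\alpha,{\mathcal A}}(p))=F(p)=k$, so ${\mathcal G}_{\alpha,{\mathcal A}}(p)\in S_k$; the entire level surface $S_k$ is mapped to itself by the group. By definition this means $S_k$ is an affine rotation surface about the same axis ${\mathcal A}$, which finishes the argument.

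The main obstacle is exactly the step that promotes set invariance of $S_0$ to the literal polynomial identity $F\circ{\mathcal G}_{\alpha,{\mathcal A}}=F$: a priori, invariance of the zero set $\{F=0\}$ only forces $F\circ{\mathcal G}_{\alpha,{\mathcal A}}$ and $F$ to have the same zero locus, not to be equal as polynomials. It is the irreducibility of $F$, together with the structural factorization of the homogeneous forms used in the proofs of Proposition \ref{const-one} and Theorem \ref{th-general}, that rules out a spurious scalar multiple or an extra factor and delivers the exact equality. Once this identity is in hand, the transfer to each $S_k$ is purely formal; in particular, no irreducibility of $F-k$ is required, because I argue directly with individual points rather than by invoking Proposition \ref{const-one} a second time.
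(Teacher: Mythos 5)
Your proposal is correct and follows essentially the same route as the paper: both directions hinge on the forward implication of Proposition \ref{const-one} to upgrade invariance of $S_0$ to the polynomial identity $F\circ{\mathcal G}_{\alpha,{\mathcal A}}=F$, after which subtracting $k$ transfers the invariance to every level set $S_k$. The only cosmetic difference is that the paper first conjugates by a rigid motion $T$ taking ${\mathcal A}$ to the $z$-axis and then reapplies the easy direction of Proposition \ref{const-one} to $F\circ T-k$, whereas you argue pointwise on $S_k$; the content is identical.
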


\begin{proof} The implication $(\Leftarrow)$ is straightforward, so we focus on $(\Rightarrow)$. Let ${\mathcal A}$ be the affine rotation axis. Now $F^{(k)}(x,y,z)=F(x,y,z)-k$ represents an affine rotation surface about the line ${\mathcal A}$ iff there exists a rigid motion $T(\bfx)=Q\bfx+b$, independent of $k$, mapping ${\mathcal A}$ to the $z$-axis, such that $\tilde{F}^{(k)}=F^{(k)}\circ T$ is an affine rotation surface about the $z$-axis. We claim that $T$ is any rigid motion mapping ${\mathcal A}$ to the $z$-axis, such that in the new system of coordinates, the surface defined by $F$ is invariant under some uniparametric canonical matrix group $\bfQ_{\alpha}$. Such a transformation $T$ exists because by hypothesis the surface defined by $F$ is invariant under some group $\bfQ_{\alpha}$. So let us see that $\tilde{F}^{(k)}$ is also invariant under the same group $\bfQ_{\alpha}$. Indeed, using Proposition \ref{const-one}, applied on $F$, we get 
\[
\tilde{F}^{(k)}(\bfQ_{\alpha}(\bfx))=(F\circ T)(\bfQ_{\alpha}(\bfx))-k=(F\circ T)(\bfx)-k=\tilde{F}^{(k)}(\bfx).
\]
Now the implication follows from the implication ``$\Leftarrow$" of Proposition \ref{const-one}, applied this time on $\tilde{F}^{(k)}$.
\end{proof}

Therefore we have the following corollary, analogous to Corollary 12 of \cite{AG17}.

\begin{corollary} \label{cor-cruc}
If the surface $F(x,y,z)=0$ is an affine rotation surface then all the affine normals of the surfaces $\{S_k\}_{k\in {\Bbb R}}$ belong to a common hyperplane \eqref{lin} in ${\Bbb R}^6$.
\end{corollary}

Now we can give a necessary condition for a polynomial $F(x,y,z)$ to represent an affine rotation surface. Recall here the definition of $\gamma=\gamma(x,y,z)$ from Eq. \eqref{xi-gamma}. 

\begin{theorem} \label{rev}
Let $F(x,y,z)$ be an irreducible polynomial defining an affine rotation surface $S$ about an axis ${\mathcal A}$. Let $\bfP=(x,y,z)$ be an arbitrary point in ${\Bbb R}^3$, and denote the Pl\"ucker coordinates of ${\mathcal A}$ by $(\beta,\alpha)$, where $\alpha=(\alpha_1,\alpha_2,\alpha_3)$ and $\beta=(\beta_1,\beta_2,\beta_3)$. Then  
\begin{equation} \label{eq3}
\begin{array}{l}
\alpha\cdot (\bfP\times \gamma)+\beta\cdot \gamma=\\
\alpha_1 (y\gamma_3-z\gamma_2)+\alpha_2 (-x\gamma_3+z\gamma_1) +\alpha_3 (x\gamma_2-y\gamma_1)
+
\beta_1 \gamma_1+\beta_2 \gamma_2+\beta_3 \gamma_3\equiv 0.
\end{array}
\end{equation}
\end{theorem}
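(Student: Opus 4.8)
The plan is to recognize that Equation \eqref{eq3} is nothing more than the Pl\"ucker intersection condition \eqref{lin} written out explicitly for two particular lines: the axis $\mathcal{A}$, whose Pl\"ucker coordinates are $(\beta,\alpha)$ by hypothesis, and the affine normal line at the point $\bfP=(x,y,z)$, whose Pl\"ucker coordinates are $(\overline{\gamma},\gamma)$ with $\overline{\gamma}=\bfP\times\gamma$. Thus the whole statement reduces, through the Pl\"ucker dictionary already set up in this section, to the geometric fact that every affine normal line of an affine rotation surface meets its axis.

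First I would invoke the characterization already established. Since $F(x,y,z)=0$ defines an affine rotation surface $S$ about $\mathcal{A}$, condition (1) of Theorem \ref{th-imp} guarantees that at every generic, non-singular point of $S$ the affine normal line intersects $\mathcal{A}$. I would also record that, because the affine normal vector is $\xi=a\cdot\gamma$ (Eq. \eqref{xi-gamma}) and Pl\"ucker coordinates are only defined up to a common nonzero scalar, the factor $a$ --- radical and all --- may be discarded. Hence the affine normal line legitimately carries the Pl\"ucker coordinates $(\overline{\gamma},\gamma)=(\bfP\times\gamma,\gamma)$ recorded in the text.

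Next I would feed these two lines into Equation \eqref{lin}. Taking $\overline{\bfw}=\overline{\gamma}$ and $\bfw=\gamma$ for the normal line, and $(\beta,\alpha)$ for $\mathcal{A}$, the intersection condition reads $\alpha\cdot\overline{\gamma}+\beta\cdot\gamma=0$; expanding the cross product $\bfP\times\gamma=(y\gamma_3-z\gamma_2,\,-x\gamma_3+z\gamma_1,\,x\gamma_2-y\gamma_1)$ and the two dot products yields precisely the scalar expression displayed in \eqref{eq3}. Since Theorem \ref{th-imp} makes this vanish at every generic point of $S$, and the left-hand side is a rational function of $(x,y,z)$ restricted to $S$, vanishing on a Zariski-dense subset forces it to vanish identically, giving the asserted identity $\equiv 0$.

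I do not expect any genuine obstacle here: the result is essentially a transcription of condition (1) of Theorem \ref{th-imp} into Pl\"ucker coordinates. The only points requiring any care are the harmless removal of the scalar $a$ when passing from the affine normal vector to the affine normal \emph{line}, and the standard promotion from ``vanishes at generic points'' to ``vanishes identically on $S$'', both of which are routine once the equivalence between the incidence of two lines and Equation \eqref{lin} is in hand.
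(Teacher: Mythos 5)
Your first two steps match the paper's proof exactly: Theorem \ref{th-imp} gives that every affine normal line meets ${\mathcal A}$, the radical scalar $a$ in Eq. \eqref{xi-gamma} is irrelevant to the line and may be dropped, and plugging the Pl\"ucker coordinates $(\bfP\times\gamma,\gamma)$ of the normal line together with $(\beta,\alpha)$ into the linear complex condition \eqref{lin} shows that the left-hand side of \eqref{eq3} vanishes at every generic point \emph{of $S$}. Up to there you are fine.

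The gap is in your final step. The theorem asserts that \eqref{eq3} holds for an \emph{arbitrary} point $\bfP=(x,y,z)\in{\Bbb R}^3$ --- the paper stresses immediately after the proof that $(x,y,z)$ need not lie on $S$, and the detection algorithm depends on this, since it substitutes random points that are not on the surface. Your Zariski-density argument only promotes ``vanishes at generic points of $S$'' to ``vanishes identically \emph{on} $S$'', i.e. it shows the numerator of the left-hand side lies in the ideal generated by $F$; it does not show the expression is the zero rational function on ${\Bbb R}^3$. The paper closes this gap with Theorem \ref{crucial} and Corollary \ref{cor-cruc}: every level surface $S_k=\{F=k\}$ is an affine rotation surface about the \emph{same} axis ${\mathcal A}$, and since $\gamma$ is built from the partial derivatives of $F$ (unchanged by replacing $F$ with $F-k$), the same expression vanishes on every $S_k$. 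As the surfaces $S_k$ sweep out all of ${\Bbb R}^3$, the identity $\equiv 0$ then holds everywhere. Without this level-surface argument your proof establishes a strictly weaker statement than the one claimed.
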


\begin{proof} If $F(x,y,z)=0$ defines an affine rotation surface about ${\mathcal A}=(\beta,\alpha)$, then by Theorem \ref{th-imp} the set of affine normal lines to $S$ belongs to the linear complex \eqref{lin}. In addition, since the Pl\"ucker coordinates of a generic affine normal line are given by $(\bfP\times \gamma,\gamma)$, the polynomial on the left hand-side of \eqref{eq3} vanishes at every point of $S$. Moreover, by Corollary \ref{cor-cruc} the polynomial on the left hand-side of \eqref{eq3} vanishes at every point of $S_k$ for every value of $k$. Hence, the polynomial in Equation \ref{eq3} vanishes everywhere, since each point $P=(x,y,z)\in {\Bbb R}^3$ lies on $S_k$ for some value of $k$.
\end{proof} 

Notice that Eq. \eqref{eq3} is an identity between two polynomials in $x,y,z$ (the polynomial on the right hand-side of Eq. \eqref{eq3} is the zero polynomial). In particular, $(x,y,z)$ does {\it not} need to be a point on the surface $S$. Hence, in practice, we can derive a linear system of homogeneous equations for the $\alpha_i$ and $\beta_j$ by picking five points at random, not necessarily on $S$ (so that the points have rational coordinates), and substituting these coordinates into Eq. \eqref{eq3}. Certainly, Eq. \eqref{eq3} itself provides such a linear system of equations (the left hand-side, seen as a polynomial in $x,y,z$, must be the zero polynomial), but deriving this system directly is computationally much more expensive than substituting with concrete triplets $(x,y,z)$. Let ${\mathcal L}$ denote the linear system derived by substitution. The solutions of ${\mathcal L}$ are the Pl\"ucker coordinates of the tentative axes of rotation. A first corollary of Theorem \ref{rev} is the following. 

\begin{corollary} \label{cor-rev}
If the coefficients of $F(x,y,z)$ are rational numbers and $S$ is an affine rotation surface about an axis ${\mathcal A}$, then ${\mathcal A}$ admits rational Pl\"ucker coordinates.
\end{corollary}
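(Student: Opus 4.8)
The plan is to extract the rationality of the Plücker coordinates directly from the structure of the linear system $\mathcal{L}$ described in the discussion following Theorem \ref{rev}. The key observation is that Theorem \ref{rev} tells us the six numbers $(\beta,\alpha)=(\beta_1,\beta_2,\beta_3,\alpha_1,\alpha_2,\alpha_3)$ satisfy the polynomial identity \eqref{eq3}, which must hold identically in $x,y,z$. Since the coefficients of $F(x,y,z)$ are rational by hypothesis, and the components $\gamma_i(x,y,z)$ are rational functions of $x,y,z$ built algebraically from $F$ and its partial derivatives (as described around Eq. \eqref{xi-gamma}, following \cite{Andrade}), each $\gamma_i$ has coefficients in $\mathbb{Q}$. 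Therefore the left-hand side of \eqref{eq3} is a rational function of $x,y,z$ whose coefficients are \emph{linear} in the unknowns $\alpha_i,\beta_j$ with \emph{rational} coefficients.

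The core argument I would carry out is the following. Requiring the left-hand side of \eqref{eq3} to vanish identically forces each of its coefficients (as a rational function in $x,y,z$) to vanish, yielding a homogeneous linear system in the six unknowns $\alpha_i,\beta_j$ whose coefficient matrix $M$ has entries in $\mathbb{Q}$. By Theorem \ref{rev}, the true Plücker coordinates of $\mathcal{A}$ form a \emph{nonzero} solution of this system, so the system has a nontrivial kernel. The essential point is then that a homogeneous linear system with rational coefficients that possesses a nontrivial (real) solution always possesses a nontrivial \emph{rational} solution: the solution space is the kernel of $M$, which can be described by rational parametrization via Gaussian elimination over $\mathbb{Q}$, so it admits a basis of vectors with rational entries. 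Any such basis vector gives rational Plücker coordinates, and since Plücker coordinates are unique only up to a scalar multiple (as noted in the discussion of Eq. \eqref{pluck}), scaling to clear denominators still represents the same line $\mathcal{A}$.

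The only subtlety, and the step I would take the most care with, is reconciling uniqueness with the existence of rational solutions: a priori the kernel of $M$ could be multi-dimensional, so I must argue that \emph{the particular line} $\mathcal{A}$ admits rational coordinates, not merely that \emph{some} element of the kernel is rational. If the kernel is one-dimensional, the argument is immediate, since the true axis $\mathcal{A}$ must then be a rational scalar multiple of the rational basis vector. If the kernel has dimension greater than one, then $S$ has more than one tentative axis of rotation; here I would invoke Theorem \ref{isaffinesphere}, which identifies such surfaces as affine spheres and pins down the geometry of the axis configuration (intersecting at a single point $P$), and then argue that the rational structure of $M$ still yields a rational basis of the solution space spanning all genuine axes. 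In all cases the conclusion reduces to the elementary fact that $\ker M$ for $M$ with rational entries has a rational basis, so every axis of affine rotation of $S$ inherits rational Plücker coordinates.
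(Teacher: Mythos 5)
Your core argument is the one the paper itself implicitly relies on: the paper states this corollary without proof, as an immediate consequence of the fact that the identity \eqref{eq3} is linear and homogeneous in $(\beta,\alpha)$ with coefficients that are rational functions of $x,y,z$ over $\mathbb{Q}$ (since the $\gamma_i$ are built rationally from $F$ and its derivatives), so the solution space of ${\mathcal L}$ is the kernel of a matrix with rational entries. In the case $D=1$ your argument is complete and correct: a one-dimensional kernel of a rational matrix is spanned by a rational vector, and the unique candidate axis therefore admits rational Pl\"ucker coordinates.

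However, the step you flag as the ``only subtlety'' is a genuine gap, and your sketch does not close it. A rational basis of a kernel of dimension $D>1$ does not imply that every one-dimensional subspace of that kernel is spanned by a rational vector: for instance a kernel equal to $\mathbb{R}^2\times\{0\}^4$ has the rational basis $e_1,e_2$ yet contains the irrational line through $(1,\sqrt{2},0,0,0,0)$. Moreover, no argument can repair this in full generality, because the statement read literally fails when $D>1$: the unit sphere $x^2+y^2+z^2=1$ has rational coefficients and is an elliptic affine rotation surface about \emph{every} line through the origin, including lines whose direction vector is $(1,\sqrt{2},0)$; such a line's Pl\"ucker coordinates cannot be rescaled to become rational. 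Invoking Theorem \ref{isaffinesphere} tells you that multiple axes must meet at a single point, but it does not restore rationality of an arbitrary individual axis. The corollary should therefore be read (as it is in fact used in the algorithm) in the situation where the axis is determined by the surface, i.e. $D=1$, or else weakened to the claim that \emph{some} axis admits rational Pl\"ucker coordinates --- which does follow from the rational basis of $\ker M$. Your proof of the $D=1$ case is exactly the intended one.
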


Let us now see how to take advantage of Theorem \ref{rev} in order to detect affine rotation surfaces. Let $D$ be the dimension of the solution space of ${\mathcal L}$. Then there are three different possibilities: 
\begin{itemize}
\item [(1)] $D=0$: in this case, from Theorem \ref{rev} we conclude that $S$ is not an affine surface of rotation.
\item [(2)] $D=1$: in this case, from Theorem \ref{rev} we conclude that ${\mathcal A}=(\beta,\alpha)$, where $(\beta,\alpha)$ is the solution of ${\mathcal L}$, is the only possible affine axis of rotation of $S$. Notice that the fact that $D=1$ is not enough to ensure that $S$ is an affine rotation surface, since by Theorem \ref{th-imp} we need that $S$ also  
satisfies the shadow line property. However, from \cite{Lee} we know that if $S$ is an affine rotation surface, then $S$ must be of either elliptic, or hyperbolic, or parabolic type.
\item [(3)] $D>1$: in this case, we deduce that there are several lines ${\mathcal A}_i$ that intersect all the affine normal lines of $S$, so according to Lemma \ref{atmost}, all the ${\mathcal A}_i$ intersect at a point $P$, and $S$ is an affine sphere centered at $P$. In this case, $S$ may or may not be an rotation surface, since not every affine sphere is an affine rotation surface.
\end{itemize}

Now we have the following result, which follows from these observations and Remark \ref{rem-con}. Notice that parabolic affine rotation surfaces are not included here. We will address these surfaces separately after our discussion of Theorem \ref{th-final-alg}.

\begin{theorem}\label{th-final-alg}
Suppose that $D=1$. Let ${\mathcal A}$ be the line whose Pl\"ucker coordinates correspond to the solution space of ${\mathcal L}$, and let $\Pi$ be a generic plane normal to ${\mathcal A}$.
\begin{itemize}
\item [(1)] If $S\cap \Pi$ factors into concentric circles, then $S$ is an elliptic affine rotation surface with axis ${\mathcal A}$. 
\item [(2)] If $S\cap \Pi$ factors into rectangular hyperbolas with the same center and asymptotes, then $S$ is a hyperbolic affine rotation surface with axis ${\mathcal A}$. 
\end{itemize}
\end{theorem}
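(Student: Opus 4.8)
The plan is to establish each direction of these two biconditionals by combining the structural information already available from the $D=1$ hypothesis with the cross-section descriptions recalled in Remark \ref{rem-con}. The key observation is that once $D=1$, Theorem \ref{rev} tells us that ${\mathcal A}=(\beta,\alpha)$ is the \emph{only} line intersecting all the affine normal lines of $S$. In particular, by Lemma \ref{atmost} no second such line can exist unless $S$ is an affine sphere; so the hypothesis $D=1$ already rules out the degenerate multi-axis situation and pins down ${\mathcal A}$ as the unique candidate axis.

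For the backward implications, I would argue as follows. In case (1), suppose $S\cap \Pi$ factors into concentric circles for a generic plane $\Pi$ normal to ${\mathcal A}$. Since $\Pi$ is generic and $S$ is algebraic, this means that for every plane normal to ${\mathcal A}$ the section consists of circles centered at the point $\Pi\cap{\mathcal A}$. This is precisely the hypothesis of the first part of Remark \ref{rem-con}, which allows us, after a suitable orthogonal change of coordinates taking ${\mathcal A}$ to the $z$-axis, to locally parametrize $S$ as in Eq. \eqref{local-param} with $\bfQ_{\alpha}$ the elliptic (left-most) matrix group in Eq. \eqref{types}. Hence $S$ is an elliptic affine rotation surface about ${\mathcal A}$. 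Case (2) is entirely analogous: if the sections $S\cap\Pi$ are rectangular hyperbolas sharing center and asymptotes, the second part of Remark \ref{rem-con} yields a parametrization \eqref{local-param} with the hyperbolic (center) matrix group, so $S$ is a hyperbolic affine rotation surface about ${\mathcal A}$.

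For the forward implications, I would invoke Theorem \ref{th-imp} and the classification from \cite{Lee}. If $S$ is an affine rotation surface, its unique axis must be ${\mathcal A}$ (by the $D=1$ discussion above, since the axis necessarily intersects every affine normal line by Theorem \ref{th-imp}), and $S$ must be of elliptic, hyperbolic, or parabolic type. The parabolic type is excluded here because for a parabolic affine rotation surface the relevant cross sections are by planes \emph{parallel} to ${\mathcal A}$, not normal to it, so the sections by planes normal to ${\mathcal A}$ are not the defining conics; this is the content of the remark that parabolic surfaces are treated separately. In the elliptic case the parallel curves are, by the discussion following Eq. \eqref{local-param}, circles centered on ${\mathcal A}$ lying in planes normal to ${\mathcal A}$, giving exactly the factorization in (1); in the hyperbolic case they are rectangular hyperbolas with common center and asymptotes in planes normal to ${\mathcal A}$, giving the factorization in (2). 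Matching the surface type to its cross-section description closes both biconditionals.

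The main obstacle I anticipate is the genericity argument for $\Pi$: one must be careful that the factorization of $S\cap\Pi$ into circles (respectively rectangular hyperbolas with fixed center and asymptotes) persists for \emph{all} planes normal to ${\mathcal A}$, not merely for one generic plane, so that Remark \ref{rem-con} applies uniformly. Since $S$ is algebraic and irreducible and the condition of being a union of such conics is closed, checking a single generic section suffices, but this point should be stated cleanly rather than glossed over. A secondary subtlety is confirming that the axis supplied by the $D=1$ solution of ${\mathcal L}$ coincides with the genuine axis of rotation once the type is identified; this follows because any actual axis must solve ${\mathcal L}$ by Theorem \ref{rev}, and $D=1$ leaves only ${\mathcal A}$.
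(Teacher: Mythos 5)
Your proposal is correct and follows essentially the same route as the paper, which derives the theorem directly from the preceding $D=1$ discussion together with Remark \ref{rem-con} (sections by planes normal to ${\mathcal A}$ that are concentric circles, resp. rectangular hyperbolas with common center and asymptotes, yield the local parametrization \eqref{local-param} and hence the elliptic, resp. hyperbolic, type). The converse implications you prove are not required by the statement, but they are consistent with the parallel-curve descriptions following Eq. \eqref{local-param}, and your explicit attention to the genericity of $\Pi$ and to the uniqueness of ${\mathcal A}$ via Theorem \ref{rev} only makes the argument more careful than the paper's.
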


Theorem \ref{th-final-alg} provides a method to detect whether or not $S$ is an affine rotation surface of elliptic or hyperbolic type in the case $D=1$. In practice, in order to check whether or not $S$ is an elliptic or hyperbolic affine rotation surface, after computing ${\mathcal A}$ we can pick a random plane $\Pi$ normal to ${\mathcal A}$, and factor $S\cap \Pi$ to check whether or not $S\cap \Pi$ factors completely into concentric circles, or rectangular hyperbolas with the same center and asymptotes. The process can be made simpler by first applying a rigid motion so that ${\mathcal A}$ is transformed into the $z$-axis.

In order to check if $S$ is a parabolic affine rotation surface we need, as observed in Remark \ref{rem-con}, another strategy. Since ${\mathcal A}$ is known we can find the normal direction $L_{\mathcal A}$ by first implicitizing the surface $S$ and then using the ideas in Remark \ref{how-to-find}. (An absolute factorization of a multivariate polynomial can be computed, for instance, using the command {\tt AFactor} in Maple 18, which works quickly and efficiently). 
Then we can pick a random plane $\Pi'$ normal to $L_{\mathcal A}$, and factor $S\cap \Pi'$ to check whether or not $S\cap \Pi'$ factors completely into parabolas. 

This approach provides a heuristic algorithm to check whether or not $S$ is an affine rotation surface in the case when $S$ is not an affine sphere, and to find the affine rotation group leaving $S$ invariant. A deterministic algorithm can then be built by checking if $S$ is invariant under the corresponding rotation group (elliptic, hyperbolic or parabolic, depending on the nature of the parallel curves) using Proposition \ref{const-one}. 

If $D>1$, in which case $S$ is necessarily an affine sphere, we need a different strategy. In this case $S$ might have different axes of elliptic, hyperbolic, and parabolic types intersecting at a point $P$, the center of $S$ seen as an affine sphere. We can find these axes from the solution of $D$. The elliptic axes, if any, can be computed by using the results in \cite{AG17}. The hyperbolic axes, if any, can be computed by using the results in \cite{AGHM16}. The parabolic axes, if any, can be found by using the results in Section \ref{subsec-parab}. From Remark \ref{how-to-find} in Section \ref{subsec-parab}, if $F(x,y,z)$ has a nonzero linear form, or $F_N(x,y,z)$ has some linear factor, then the direction of the parabolic axis can be determined; since we also have a point on the axis, namely $P$, the axis can be computed. Otherwise (see also Remark \ref{how-to-find}) we have a point $P$ on the axis and tentative normal planes that contain the axis. Next we can pick a generic direction ${\bf v}=(a,b,c)\in {\Bbb R}^3$ in each of these planes, and impose that $S$ is a parabolic affine rotation surface about the axis through $P$, parallel to ${\bf v}$; since we can always choose one of the coordinates of ${\bf v}$ to be 1, the problem reduces to solving bivariate polynomial systems.

\subsection{The rational case.} \label{sec-theratcase}

In this subsection we assume that $S$ is defined by a rational parametrization ${\bf x}(u,v)$. Using Eq. \eqref{nu}, one can check that the affine co-normal vector $\nu=\beta^{-1/4} ({\bf x}_u \times {\bf x}_v)$, where $\beta=\beta(u,v)$ is the product of the determinants of the first and second fundamental forms of ${\bf x}(u,v)$. Since ${\bf x}(u,v)$ is rational, $\beta$ and the components of ${\bf x}_u,{\bf x}_v$ are rational functions of $u,v$. Furthermore, a straightforward computation shows that $\nu_u\times \nu_v=\beta^{-3/2} \Phi$, where $\Phi=\Phi(u,v)$ has rational components. Hence
\[[\nu,\nu_u,\nu_v]=\nu\cdot (\nu_u\times \nu_v)=\beta^{-7/4} ({\bf x}_u \times {\bf x}_v)\cdot \Phi,\]
where $({\bf x}_u \times {\bf x}_v)\cdot \Phi$ is a rational function. Therefore, using Eq. \eqref{affine-normal}, we find that the affine normal vector $\xi=\beta^{1/4} \frac{\Phi}{({\bf x}_u \times {\bf x}_v)\cdot \Phi}$, where $\mu=\frac{\Phi}{({\bf x}_u \times {\bf x}_v)\cdot \Phi}$ has rational components in $u,v$. Calling $\overline{\mu}={\bf x}\times \mu$, where ${\bf x}={\bf x}(u,v)$, we can take $(\overline{\mu},\mu)$ as Pl\"ucker coordinates for the affine normal line to $S$ at a generic non-singular point ${\bf x}(u,v)$. Notice that the components of $(\overline{\mu},\mu)$ are rational functions of $u,v$.  

Now from Theorem \ref{rev} we get the following necessary condition for a rational parametrization ${\bf x}(u,v)$ to represent a generalized affine surface of rotation.

\begin{theorem} \label{rev-rat}
Let ${\bf x}(u,v)$ be rational parametrization. If ${\bf x}(u,v)$ defines a generalized affine rotation surface $S$ about an axis ${\mathcal A}$ with Pl\"ucker coordinates $(\beta,\alpha)$, where $\alpha=(\alpha_1,\alpha_2,\alpha_3)$ and $\beta=(\beta_1,\beta_2,\beta_3)$, then
\begin{equation} \label{eq3-new}
\begin{array}{l}
\alpha\cdot ({\bf x}(u,v)\times \mu(u,v))+\beta\cdot \mu(u,v)= 0.
\end{array}
\end{equation}
\end{theorem}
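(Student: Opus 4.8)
The plan is to obtain \eqref{eq3-new} as the parametric counterpart of Theorem \ref{rev}, deriving it directly from the geometric characterization in Theorem \ref{th-imp} together with the Pl\"ucker intersection condition \eqref{lin}. The key observation is that in the parametric setting the desired identity is already expressed in the surface parameters $u,v$, so no auxiliary level-surface argument (such as Corollary \ref{cor-cruc} in the implicit case) is needed: the parametrization ${\bf x}(u,v)$ already sweeps out all of $S$.

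First I would invoke the computation carried out just before the statement, which shows that the affine normal line to $S$ at a generic non-singular point ${\bf x}(u,v)$ admits $(\overline{\mu},\mu)$ as Pl\"ucker coordinates, with $\mu=\mu(u,v)$ rational in $u,v$ and $\overline{\mu}={\bf x}(u,v)\times\mu(u,v)$. This is legitimate because the affine normal vector satisfies $\xi=\beta^{1/4}\mu$, so $\mu$ is parallel to $\xi$ and thus gives the direction of the affine normal line, while $\overline{\mu}={\bf x}\times\mu$ records a point of that line. Next, since by hypothesis $S$ is an affine rotation surface about ${\mathcal A}$, condition (1) of Theorem \ref{th-imp} guarantees that every affine normal line of $S$ intersects ${\mathcal A}$. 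Writing the Pl\"ucker coordinates of ${\mathcal A}$ as $(\beta,\alpha)$ and those of the affine normal line at ${\bf x}(u,v)$ as $(\overline{\mu},\mu)$, the incidence condition \eqref{lin} that these two lines meet reads $\alpha\cdot\overline{\mu}+\beta\cdot\mu=0$, i.e. $\alpha\cdot({\bf x}(u,v)\times\mu(u,v))+\beta\cdot\mu(u,v)=0$, which is exactly the left-hand side of \eqref{eq3-new}.

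Finally I would upgrade this pointwise vanishing to a genuine identity. The left-hand side of \eqref{eq3-new} is a rational function of $u,v$, and the argument above shows that it vanishes at every $(u,v)$ for which the point ${\bf x}(u,v)$ is non-singular and the affine normal is defined; this locus is open and dense in the parameter domain, being the complement of the singular locus together with the zero set of the Gaussian curvature. A rational function vanishing on an open dense set vanishes identically, giving \eqref{eq3-new}. I do not expect a real obstacle here: the genuinely delicate step of Theorem \ref{rev}, namely using Corollary \ref{cor-cruc} to extend the identity from $S$ to all of ${\Bbb R}^3$, has no analogue in the parametric case, so the only point deserving a word is this elementary passage from the dense locus of definition to the full rational identity.
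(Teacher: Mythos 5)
Your proof is correct. The paper obtains Theorem \ref{rev-rat} in one line as a corollary of the implicit-case result, Theorem \ref{rev}: conceptually, one passes to the implicit equation of $S$, invokes the polynomial identity \eqref{eq3} on all of ${\Bbb R}^3$, and restricts it to the parametrization, using that $\gamma$ and $\mu$ are both parallel to the affine normal $\xi$ and that \eqref{eq3} is linear in the direction vector of the affine normal line. You instead argue directly in the parametric setting: condition (1) of Theorem \ref{th-imp} gives that every affine normal line meets ${\mathcal A}$, the incidence condition \eqref{lin} applied to the Pl\"ucker coordinates $(\overline{\mu},\mu)$ yields the vanishing of the left-hand side of \eqref{eq3-new} at every parameter value where the affine normal is defined, and rationality upgrades this pointwise vanishing on a dense open set to an identity. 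Your remark that the level-surface argument of Corollary \ref{cor-cruc} has no analogue here is exactly right: \eqref{eq3-new} is asserted only in the surface parameters, so no extension off the surface is required. What your route buys is independence from implicitization and from Theorem \ref{rev} altogether, making the rational case self-contained; what the paper's route buys is brevity by reusing the implicit result. Both arguments ultimately rest on the same geometric fact, namely that the affine normal lines of an affine rotation surface all intersect the axis. (One cosmetic point: $\overline{\mu}={\bf x}\times\mu$ is the moment vector of the affine normal line, not ``a point of that line''; this does not affect the argument, since the pair $({\bf x}\times\mu,\mu)$ is precisely the Pl\"ucker representation the paper uses for the line through ${\bf x}$ with direction $\mu$.)
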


Eq. \eqref{eq3-new} provides a linear system of equations ${\mathcal L}$ in $\alpha_i,\beta_i$, and the solution provides the Pl\"ucker coordinates of a tentative axis of rotation. As in the implicit case, if $D$ is the dimension of the solution space of ${\mathcal L}$, we need $D\geq 1$ for $S$ to be an affine rotation surface, and we must distinguish the cases $D=1$ and $D> 1$. If $D=1$, in order to check whether or not $S$ is an affine rotation surface of either elliptic or hyperbolic type, we proceed as in the implicit case: we compute the axis ${\mathcal A}$ from the solution space of ${\mathcal L}$, we apply a rigid motion so that ${\mathcal A}$ is transformed into the $z$-axis, and then we analyze the intersection of $S$ with a random plane $\Pi$ normal to the $z$-axis. In the rational case, in general we need to compute the implicit equation of $S\cap \Pi$ in order to analyze the nature of $S\cap \Pi$. If $D>1$, one can still check whether or not $S$ is an elliptic affine rotation surface by applying the results in \cite{AG17}.

However, in order to check whether or not $S$ is a parabolic affine rotation surface, both in the cases $D=1$ and $D>1$, the situation is more complicated, since we need to find the normal direction $L_{\mathcal A}$. Currently, the only method we can provide is to compute the form of highest degree $F_N(x,y,z)$ of the implicit equation, and then apply the ideas in Remark \ref{how-to-find}. A similar situation arises in the case $D>1$ in order to check whether or not $S$ is a hyperbolic affine rotation surface. In this case, the only method that we can suggest at the moment is implicitizing the surface, and then using the method in Section \ref{subsec-implicit}.

\subsection{Algorithms and examples.}

The ideas in the previous sections give rise to two algorithms that can be found in Appendix II, {\tt Affine Rotations-Impl} and {\tt Affine Rotations-Rat}, to check whether or not $S$ is an implicit or rational affine surface of rotation, and if so to find the type and the axis of rotation. The following examples illustrate the main ideas behind these algorithms.

\begin{example} Let $S$ be the quartic surface implicitly defined by 
\[
F(x,y,z)=2xy^3-6xy^2z+6xyz^2-2xz^3+4y^3z-8y^2z^2+4yz^3-y+z-1.
\]
We consider the linear system ${\mathcal L}$ provided by Eq. \eqref{eq3} for several random points. For instance, for the point $x=-59$, $y=6$, $z=5$, we get $485\alpha_2-582\alpha_3-1909\beta_1+204\beta_2+170\beta_3=0$. Similarly, for $x=-\frac{322}{27}$, $y=6$, $z=3$ we get $747\alpha_2-1494\alpha_3-17139\beta_1+8748\beta_2+4374\beta_3=0$. The solution of the system ${\mathcal L}$ is 
\[
\alpha_2=\alpha_3=\beta_1=\beta_2=\beta_3=0,
\]
which corresponds to the $x$-axis ($\alpha_1=1$). In particular, the dimension $D$ of the solution space of ${\mathcal L}$ is $D=1$. In order to see whether or not $S$ is an affine rotation surface, we intersect $S$ with the plane $x=1$. This intersection generates the space curve 
\[
\{4y^3z-8y^2z^2+4yz^3+2y^3-6y^2z+6yz^2-2z^3-y+z-1=0,\mbox{ }x=1\},
\]
which is not the union of circles or hyperbolas (in fact, this polynomial in $y,z$ is irreducible over the complex numbers). Therefore, if $S$ is an affine rotation surface, $S$ must be of parabolic type. Since the form of highest degree of $F(x,y,z)$ is 
\[
F_N(x,y,z)=(y-z)^2(xy-xz+2yz),
\]
if $S$ is a parabolic affine rotation surface, the intersection of $S$ with planes $y-z=c$ must yield a product of parabolas with axis parallel to the $x$-axis. The linear change of coordinates 
\[
\left\{x:=z,\mbox{ }y=\frac{1}{2}(-x+y),\mbox{ }z=\frac{1}{2}(x+y)\right\}
\]
takes the $x$-axis to the $z$-axis, and the planes $y-z=c$ to the planes $x=c'$. Applying this change of coordinates, the surface $S$ is transformed into the surface 
\[
\tilde{F}(x,y,z)=x^2(y^2-2xz-x^2)+x-1,
\]
where we can recognize a parabolic affine rotation surface about the $z$-axis. Therefore, we conclude that the given surface $S$ is a parabolic affine rotation surface, whose axis ${\mathcal A}$ is the $x$-axis.
\end{example}

\begin{example} Let $S$ be the surface parametrized by \[{\bf x}(u,v)=(x(u,v),y(u,v),z(u,v)),\]where
\[
x(u,v)=\frac{-u^3(v^2+1)}{2v},\mbox{ }y(u,v)=\frac{u^3(v^2-1)}{2v},\mbox{ }z(u,v)=u^2+1.
\]
We consider the linear system ${\mathcal L}$ provided by Eq. \eqref{eq3-new} for $u=1,2,3$ and $v=1,2,3$. For instance, for $u=1,v=1$, Eq. \eqref{eq3-new} yields $10\alpha_2+6\beta_1-2\beta_3=0$. Similarly, for $u=1,v=2$, we get $15\alpha_1+25\alpha_2+15\beta_1-9\beta_2-4\beta_3=0$. The solution of the system ${\mathcal L}$ is 
\[
\alpha_1=\alpha_2=\beta_1=\beta_2=\beta_3=0,
\]
which corresponds to the $z$-axis ($\alpha_3=1$). In particular the dimension $D$ of the solution space is $D=1$. In order to see whether or not $S$ is an affine rotation surface, we intersect $S$ with a plane $\Pi$ normal to the $z$-axis. In our case we take $\Pi$ to be the plane $z-10=0$. Then $z(u,v)=u^2+1=10$, so $u^2=9$ and $u=\pm 3$. Therefore, the intersection $S\cap \Pi$ is the union of the curves parametrized by
\[
\left(\frac{-27(v^2+1)}{2v},\frac{27(v^2-1)}{2v},10\right),\mbox{ }\left(\frac{27(v^2+1)}{2v},\frac{-27(v^2-1)}{2v},10\right),
\]
which are two rectangular hyperbolas with the same center and asymptotes. Hence, if $S$ is an affine rotation surface, it must be of hyperbolic type. In fact, in this case it is easy to recognize that $S$ is the hyperbolic affine rotation surface generated by Eq. \eqref{local-param}, where $\bfQ_{\alpha}$ is the matrix in the middle of Eq. \eqref{types}, and with directrix $(u^3,0,u^2+1)$. 
\end{example}

\begin{example} Let $S$ be the ellipsoid $4x^2+16y^2+z^2=1$. In this case, after picking random points the linear system ${\mathcal L}$ provided by Eq. \eqref{eq3} provides the solution 
\[
\alpha_1 = t,\mbox{ }\alpha_2 =s,\mbox{ }\alpha_3 = w,\mbox{ }\beta_1 =\beta_2 =\beta_3 = 0.
\]
In this case, the dimension $D$ of the solution space is $D=3$, so $S$ is an affine sphere with center at $\beta=(0,0,0)$. Since $S$ is not a surface of revolution, $S$ is not an elliptic affine rotation surface. And since no family of planes intersects $S$ in hyperbolas or parabolas, we conclude that $S$ is not an affine rotation surface. 
\end{example}

\section{Conclusion}

We have investigated a generalization of surfaces of revolution, called affine rotation surfaces, well-known in affine differential geometry. We have provided a practical computational algorithm to detect whether or not an algebraic surface, given in either implicit form or in some cases in rational parametric form, is an affine rotation surface, and to identify the type (elliptic, hyperbolic, parabolic) of the surface and the axis of rotation of the surface. These results generalize previous results of the authors for surfaces of revolution, namely Theorem 13 in \cite{AG17}. 

Additionally, we have investigated some properties of algebraic affine rotation surfaces of parabolic type, and several properties of algebraic affine spheres, surfaces whose affine normal lines intersect at a single point. In order to derive these results, we needed the notion of an affine normal, and the well-known characterization of affine rotation surfaces as surfaces whose affine normal lines intersect a common line. Since the affine normal is not defined when the surface has vanishing Gaussian curvature, surfaces with vanishing Gaussian curvature were excluded from our study. Hence, a deeper analysis of surfaces with vanishing Gaussian curvature -- i.e. developable surfaces -- is posed here as a pending open problem for future research. 

Further open questions are suggested by the following issue. To analyze certain cases of rational surfaces, we need to compute the implicit equation of the surface, or at least the highest order form of the implicit equation, which can be costly. This task suggests two open problems: first, finding a better method to detect affine rotation surfaces given in rational parametric form, in all possible cases, that does not require implicitization; second, finding an efficient method to implicitize affine rotation surfaces of parabolic and hyperbolic type. These are questions that we would like to explore in the future. 

\vspace{0.2 cm}
\noindent{\bf Acknowledgements.} Juan G. Alc\'azar is supported by the Spanish Ministerio de Econom\'{\i}a y Competitividad and by the European Regional Development Fund (ERDF), under the project  MTM2017-88796-P, and is a member of the Research Group {\sc asynacs} (Ref. {\sc ccee2011/r34}). We thank Maria Andrade and F. Manhart for some hints and references on affine differential geometry.

\section*{References}

\newpage

\section{Appendix I: the cones $y^2-2xz-wx^2=0$.}

The matrix associated with the quadratic form $y^2-2xz-wx^2$ is 
\[
A=\begin{bmatrix}
-w & 0 & -1\\
0 & 1 & 0\\
-1 & 0 & 0
\end{bmatrix}
\]
The eigenvalues of the matrix $A$ are 
\begin{equation}\label{lambda1}
\lambda_1=1,\mbox{ }\lambda_2=-\frac{1}{2}w+\frac{1}{2}\sqrt{w^2+4},\mbox{ }\lambda_3=-\frac{1}{2}w-\frac{1}{2}\sqrt{w^2+4}.
\end{equation}
Observe that $\lambda_2\cdot \lambda_3=-1$, therefore $\lambda_2$ and $\lambda_3$ are always nonzero. Furthermore, if $w\neq 0$ then $\lambda_i\neq 1$ for $i=2,3$; also, $\lambda_2=\lambda_3$ iff $w^2+4=0$, i.e. $w=\pm 2i$, where $i^2=-1$. Now we distinguish the following three cases:

\begin{itemize}
\item [(1)] \fbox{$w\neq 0,\pm 2i$} The matrix $A$ has three distinct, eigenvalues, so $A$ is diagonalizable and $y^2-2xz-wx^2=0$ defines a cone that is not a surface of revolution. Since $\lambda_2\cdot \lambda_3=-1$, $\lambda_2$ and $\lambda_3$ have opposite signs, so $y^2-2xz-wx^2=0$ defines a real cone iff $w$ is real. Furthermore, in that case we get an elliptic cone. The eigenvector associated with $\lambda=1$ is $(0,1,0)$, which defines one of the symmetry axes of the cone. The eigenvectors associated with the other two eigenvalues are parallel to the $xz$-plane.

\item [(2)] \fbox{$w=\pm 2i$} The matrix $A$ has only two distinct eigenvalues, $\lambda_1=1$ (simple) and $\lambda_2=\mp i$ (double), but $A$ is not diagonalizable. The eigenvector associated with $\lambda=1$ is $(0,1,0)$.   

\item [(3)] \fbox{$w=0$} The matrix $A$ has only two distinct eigenvalues, $\lambda_1=1$ (double) and $\lambda_2=-1$ (simple). The matrix $A$ is diagonalizable, so $y^2-2xz=0$ defines a real cone of revolution about the line through the origin in the direction of the eigenvector associated with $\lambda_2=-1$, namely $(1,0,1)$. Notice that the axis of revolution of this cone forms an angle of $\frac{\pi}{4}$ with both the $x$-axis and the $z$-axis. 

\end{itemize}

\section{Appendix II: algorithms.}

\begin{algorithm}[t!]
\begin{algorithmic}[1]
\REQUIRE A real, algebraic, irreducible surface $S$, defined by an implicit equation $F(x,y,z)$, whose Gaussian curvature is not identically zero.
\ENSURE Whether or not $S$ is an affine sphere, and$/$or an affine rotation surface, the type, and the axes of rotation, if any.
\STATE{compute the vector $\gamma(x,y,z)$ (see Equation \eqref{xi-gamma})}
\FOR{$i=1$ to 5}
\STATE{pick a random point $P_i=(x_i,y_i,z_i)$ with rational coordinates}
\STATE{compute the corresponding equation Eq. \eqref{eq3}}
\ENDFOR
\STATE{let ${\mathcal L}$ be the linear system consisting of the 5 previous equations}
\STATE{compute the dimension $D$ of the solution space of ${\mathcal L}$}
\IF{$D=0$}
\STATE{{\bf return} {\tt $S$ is not an affine rotation surface nor an affine sphere}}
\ENDIF
\IF{$D=1$}
\STATE{{\bf return} {\tt $S$ is not an affine sphere}}
\STATE{compute the tentative axis ${\mathcal A}$ from the solution in $D$}
\STATE{pick a random plane $\Pi$, normal to ${\mathcal A}$}
\STATE{check if $S\cap \Pi$ factors into concentric circles or rectangular hyperbolas with the same center and asymptotes}
\STATE{in the negative case, find the direction of $L_{\mathcal A}$ from Remark \ref{how-to-find}, then pick a generic plane $\Pi'$ normal to $L_{\mathcal A}$, and check if $S\cap \Pi'$ factors into parabolas with the same axis}
\STATE{in each case, use Eq. \ref{eq-fundam} to check if the corresponding affine rotation group leaves $S$ invariant}
\STATE{in the affirmative case, {\bf return} {\tt $S$ is an affine rotation surface}, otherwise {\bf return} {\tt $S$ is not an affine rotation surface}}
\ENDIF
\IF{$D>1$}
\STATE{{\bf return} {\tt $S$ is an affine sphere}}
\STATE{compute the elliptic axes of rotation, if any, using the results in \cite{AG17}}
\STATE{compute the hyperbolic axes, if any, using the results in \cite{AGHM16}}
\STATE{compute the parabolic axes, if any, using the results in Section \ref{subsec-parab}}
\STATE{in each case, check if the corresponding affine group leaves $S$ invariant}
\STATE{if we get some positive response {\bf return} {\tt $S$ is an affine rotation surface}, otherwise {\bf return} {\tt $S$ is not an affine rotation surface}}
\ENDIF
\end{algorithmic}
\caption{{\tt Affine Rotations-Impl}}\label{alg-affine-impl}
\end{algorithm} 

\begin{algorithm}[t!]
\begin{algorithmic}[1]
\REQUIRE A real, algebraic, irreducible surface $S$, defined by a rational parametrization ${\bf x}(u,v)$, whose Gaussian curvature is not identically zero.
\ENSURE Whether or not $S$ is an affine sphere, and$/$or an affine surface of rotation, the type, and the axes of rotation, if any.
\STATE{compute the vector $\mu(u,v)$ (see Section \ref{sec-theratcase})}
\FOR{$i=1$ to 5}
\STATE{pick a random rational pair of parameters $(u_i,v_i)$}
\STATE{compute the corresponding equation Eq. \eqref{eq3-new}}
\ENDFOR
\STATE{let ${\mathcal L}$ be the linear system consisting of the 5 previous equations}
\STATE{compute the dimension $D$ of the solution space of ${\mathcal L}$}
\IF{$D=0$}
\STATE{{\bf return} {\tt $S$ is not an affine rotation surface nor an affine sphere}}
\ENDIF
\IF{$D=1$}
\STATE{{\bf return} {\tt $S$ is not an affine sphere}}
\STATE{compute the tentative axis ${\mathcal A}$ from the solution in $D$}
\STATE{pick a random plane $\Pi$, normal to ${\mathcal A}$}
\STATE{find an implicit representation of $S\cap \Pi$.}
\STATE{check if $S\cap \Pi$ factors into concentric circles or rectangular hyperbolas with the same center and asymptotes}
\STATE{in the negative case, find the implicit equation of $S$, and test whether or not $S$ is a parabolic affine rotation surface, using the algorithm {\tt Affine Rotations-Impl}}
\STATE{in each case, use Eq. \ref{eq-fundam} to check if the corresponding affine rotation group leaves $S$ invariant}
\STATE{in the affirmative case, {\bf return} {\tt $S$ is an affine rotation surface}, otherwise {\bf return} {\tt $S$ is not an affine rotation surface}}
\ENDIF
\IF{$D>1$}
\STATE{{\bf return} {\tt $S$ is an affine sphere}}
\STATE{compute the elliptic axes of rotation, if any, using the results in \cite{AG17}}
\STATE{compute the hyperbolic and parabolic axes, if any, by first finding the implicit equation of $S$, and then applying the algorithm {\tt Affine Rotations-Impl}}
\STATE{in each case, check if the corresponding affine group leaves $S$ invariant}
\STATE{if we get some positive response {\bf return} {\tt $S$ is an affine rotation surface}, otherwise {\bf return} {\tt $S$ is not an affine rotation surface}}
\ENDIF
\end{algorithmic}
\caption{{\tt Affine Rotations-Rat}}\label{alg-affine-rat}
\end{algorithm}

\end{document}